\newtheorem{thm}{Theorem}[section]
\newtheorem{lem}[thm]{Lemma}
\newtheorem{cor}[thm]{Corollary}
\theoremstyle{definition}
\newtheorem{defn}[thm]{Definition}
\newtheorem{rem}[thm]{Remark}
\newtheorem{ex}{Example}
\newcommand{\mbb}{\mathbb}
\newcommand{\pa}{\partial}
\newcommand{\ov}{\overline}
\begin{document}
\title{The reduced Bergman kernel and its properties}
	
\author{Sahil Gehlawat*,  Aakanksha Jain$^{\dagger}$ and Amar Deep Sarkar$^{\ddagger}$}

\address{SG: Harish-Chandra Research Institute, A CI of Homi Bhabha National Institute, Chhatnag Road, Jhunsi, Prayagraj - 211019, India}
\email{sahilgehlawat@hri.res.in, sahil.gehlawat@gmail.com}
	
\address{AJ: Department of Mathematics, Indian Institute of Science, Bangalore 560 012, India}
\email{aakankshaj@iisc.ac.in}

\address{ADS: Indian Institute of Science Education and Research Kolkata, India}
\email{amar.pdf@iiserkol.ac.in}

\keywords{reduced Bergman kernel, Ramadanov theorem, localization, boundary behaviour, transformation formula}
	
\subjclass{Primary: 30H20, 46E22; Secondary: 30C40}

\thanks{*The author was supported by the postdoctoral fellowship of Harish-Chandra Research Institute, Prayagraj (Allahabad).}
\thanks{$\dagger$The author was supported in part by the PMRF Ph.D. fellowship of the Ministry of Education, Government of India.}
\thanks{$\ddagger$The author was supported by the postdoctoral fellowship of Indian Institute of Science Education and Research Kolkata.}	
\begin{abstract}
In this article, we study some properties of the $n$-th order weighted reduced Bergman kernels for planar domains, $n\geq 1$. Specifically, we look at Ramadanov type theorems, localization,  and boundary behaviour of the weighted reduced Bergman kernel and its higher-order counterparts. We also give a transformation formula for these kernels under biholomorphisms.
\end{abstract}

\maketitle

\section{Introduction}
Recall that the Bergman space associated with a domain $D\subset\mathbb{C}$ consists of square-integrable holomorphic functions on it. An important space that has a close relationship with this space is the collection of all holomorphic functions whose derivatives are square-integrable with respect to the area measure. This space can be associated with a closed subspace of the Bergman space, which we call the reduced Bergman space. The reduced Bergman space is a reproducing kernel Hilbert space with its reproducing kernel called the reduced Bergman kernel; see below for the definitions. 
\medskip

In a previous work (see \cite{GJS}), we proved a transformation formula for the weighted reduced Bergman kernels under proper holomorphic maps between bounded planar domains. In continuation of our previous effort, this note is a step forward in creating a dictionary between the Bergman kernel and the reduced Bergman kernel. This note consists of two parts.

\smallskip

In the first part, we prove Ramadanov type theorem for the weighted reduced Bergman kernel. We also prove Ramadanov type theorem for the higher-order weighted reduced Bergman kernels (see Definition \ref{Higher Order kernels}). 

\smallskip

The second part of this article is about the local analysis of these $n$-th order weighted reduced Bergman kernels near a boundary point. In particular, we study localization of the $n$-th order weighted reduced Bergman kernels.  Using this localization, we make some observations about the boundary asymptotics of these kernels.  Similar ideas have been used to study the boundary behaviour of the weighted Bergman kernels, see \cite{AK}.
Additionally, in a related work by the third author, similar boundary behaviour was observed for the class of finitely connected domains bounded by Jordan curves for the span metric, closely connected to the reduced Bergman kernel, see \cite{ADS}.

\smallskip

We also prove a transformation formula for these $n$-th order reduced Bergman kernels under biholomorphisms, that we will use to get the boundary asymptotes for these kernels; which is interesting in its own right.

\medskip

For a domain $D\subset\mathbb{C}$, we will be working with admissible weights due to Pasternak-Winiarski.  The space of admissible weights on $D$ is denoted by $AW(D)$.

\begin{defn} (See, \cite{PW1}, \cite{PW2})
Let $D\subset\mathbb{C}$ be a domain and $\mu$ be a positive measurable real-valued function on $D$. The weight $\mu$ is called an admissible weight on $D$ if
\begin{enumerate}
\item[(i)] the weighted Bergman space $A^2_{\mu}(D)$ is a closed subspace of $L^2_{\mu}(D)$ (and hence a Hilbert space), and
\item[(ii)] the evaluation functional 
$
A^2_{\mu}(D)\ni f\mapsto f(z)\in\mbb{C}
$
is continuous for every $z\in D$.
\end{enumerate}
\end{defn}

M. Sakai (\cite{Sakai_Dirichlet}, \cite{SakaiSpanMetricCurvatureBound}) defined the $n$-th order weighted reduced Bergman kernel in the following way:

\begin{defn}\label{Higher Order kernels}
Let $D\subset\mathbb{C}$ be a domain, $\mu\in AW(D)$, $\zeta\in D$, and $n$ be a positive integer. Define
\[
AD^{\mu}(D,\zeta^n)=\left\lbrace f\in \mathcal{O}(D): f(\zeta)=f'(\zeta)=\cdots = f^{(n-1)}(\zeta)=0,\,\int_{D}\lvert f'(z)\rvert^2 \mu(z) dA(z) <\infty\right\rbrace.
\]
This is a Hilbert space with respect to the inner product
\[
\langle f,g\rangle_{AD^{\mu}(D,\zeta^n)}=\int_{D}f'(z)\,\overline{g'(z)}\mu(z)\,dA(z),\quad f,g\in AD^{\mu}(D,\zeta^n).
\]
The linear functional defined by
$
AD^{\mu}(D,\zeta^n)\ni f\mapsto f^{(n)}(\zeta)\in\mathbb{C},
$
is continuous. By Riesz representation theorem, there exists a unique function $M_{D,\mu,n}(\cdot,\zeta)\in AD^{\mu}(D,\zeta^n)$ such that
$f^{(n)}(\zeta)=\langle f, M_{D,\mu,n}(\cdot,\zeta)\rangle_{AD^{\mu}(D,\zeta^n)}$ for every $f\in AD^{\mu}(D,\zeta^n)$. Define
\[
\tilde{K}_{D,\mu,n}(z,\zeta)=\frac{\partial}{\partial z} M_{D,\mu,n}(z,\zeta),\quad z,\zeta\in D.
\]
The kernel $\tilde{K}_{D,\mu,n}$ is called the $n$-th order weighted reduced Bergman kernel of $D$ with respect to the weight $\mu$. Putting $n=1$ gives us the weighted reduced Bergman kernel $\tilde{K}_{D,\mu}$ of $D$ with respect to the weight $\mu$. So,
\[
f^{(n)}(\zeta)=\int_{D} f'(z)\,\overline{\tilde{K}_{D,\mu,n}(z,\zeta)}\,\mu(z)\,dA(z)
\quad
\text{for } f\in AD^{\mu}(D,\zeta^n).
\]
\end{defn}

\medskip

We now compute the $n$-th order reduced Bergman kernels for the unit disc $\mbb{D}$.

\begin{ex}
Let $\zeta \in \mbb{D}$ and $f \in AD(\mbb{D}, \zeta^n)$ for $n \in \mbb{Z}^+$, that is $f^{(k)}(\zeta) = 0$ for all $0 \le k \le n-1$. For $g\in AD(\mbb{D},0^n)$ and $0<r<1$, the Cauchy integral formula gives us 

\begin{eqnarray*}
g^{(n)}(0)
&=& \frac{(n-1)!}{2\pi i} \int_{\vert \xi\vert = r} \frac{g'(\xi)}{\xi^n} \, d\xi = \frac{(n-1)!}{2\pi i} \int_{0}^{2\pi} \frac{g'(re^{it})}{(re^{it})^n} r e^{it} i \, dt
\\
&=&
\frac{(n-1)!}{2\pi} \int_{0}^{2\pi} \frac{g'(re^{it})}{r^{n-1}} \overline{(e^{it})^{n-1}} \, dt.
\end{eqnarray*}

\noindent Multiplying both sides by $r^{2n-1}$ and integrating with respect to parameter $r$, we get
\[
\int_{0}^{1} g^{(n)}(0) r^{2n-1} dr = \frac{(n-1)!}{2\pi} \int_{0}^{1} \int_{0}^{2\pi} g'(re^{it}) \overline{(re^{it})^{n-1}} \,r \, dr \, dt.
\]
By change of variables on the right hand side of the above equation, we get

\begin{equation}\label{Eqn3.1}
g^{(n)}(0) = \frac{n!}{\pi} \int_{\mbb{D}} g'(\xi) \overline{(\xi)^{n-1}} \, dA(\xi).
\end{equation}

\noindent Let $\tilde{K}_{n}(\cdot, \cdot)$ denote the $n$-th order reduced Bergman kernel of $\mbb{D}$. For all $\zeta \in \mbb{D}$, and $f \in AD(\mbb{D}, \zeta^n)$, we have
\begin{equation}\label{Eqn3.2}
f^{(n)}(\zeta) = \int_{\mbb{D}} f'(\xi) \overline{\tilde{K}_{n}(\xi,\zeta)} \, dA(\xi).
\end{equation}

\noindent Let $\phi_{\zeta}: \mbb{D} \rightarrow \mbb{D}$ be the automorphism of unit disc given by $\phi_{\zeta}(z) = \frac{\zeta - z}{1- z\overline{\zeta}}$. Note that $\phi_{\zeta}(0) = \zeta$, $\phi_{\zeta}(\zeta) = 0$, and $\phi_{\zeta} \circ \phi_{\zeta}(z) = z$ for all $z \in \mbb{D}$. Consider the holomorphic function $f \circ \phi_{\zeta}$. Observe that $f \circ \phi_{\zeta} \in AD(\mbb{D}, 0^n)$ and $(f \circ \phi_{\zeta})^{(n)}(0) = f^{(n)}(\zeta) (\phi_{\zeta}'(0))^n$. On substituting $g=f\circ \phi_{\zeta}$ in equation (\ref{Eqn3.1}), we get

\begin{eqnarray*}
f^{(n)}(\zeta) (\phi_{\zeta}'(0))^n &=& \frac{n!}{\pi} \int_{\mbb{D}} (f\circ \phi_{\zeta})'(\chi) \overline{(\chi)^{n-1}} \, dA(\chi) 
\\
&=& \frac{n!}{\pi} \int_{\mbb{D}} f'(\phi_{\zeta}(\chi)) \phi_{\zeta}'(\chi) \overline{(\chi)^{n-1}} \, dA(\chi). \\
\end{eqnarray*}

\noindent Now by doing change of variables $\xi = \phi_{\zeta}(\chi)$, we get $\chi = \phi_{\zeta}(\xi)$, and $\phi_{\zeta}'(\chi) = \frac{1}{\phi_{\zeta}'(\xi)}$. Therefore

\begin{eqnarray*}
f^{(n)}(\zeta) (\phi_{\zeta}'(0))^n &=& \frac{n!}{\pi} \int_{\mbb{D}} f'(\xi) (\phi_{\zeta}'(\xi))^{-1} \overline{(\phi_{\zeta}(\xi))^{n-1}} \, \vert \phi_{\zeta}'(\xi)\vert^2 \, dA(\xi) \\
&=& \frac{n!}{\pi} \int_{\mbb{D}} f'(\xi) \overline{(\phi_{\zeta}(\xi))^{n-1}} \, \overline{\phi_{\zeta}'(\xi)} \, dA(\xi). \\
\end{eqnarray*}
Therefore we get

\begin{equation}\label{Eqn3.3}
f^{(n)}(\zeta) = \int_{\mbb{D}} f'(\xi) \frac{n! \overline{(\phi_{\zeta}(\xi))^{n-1}} \, \overline{\phi_{\zeta}'(\xi)}}{\pi (\phi_{\zeta}'(0))^{n}} \, dA(\xi).
\end{equation}

\noindent Comparing the equations (\ref{Eqn3.2}) and (\ref{Eqn3.3}), we get 
\[
\tilde{K}_{n}(\xi, \zeta) = \frac{n!}{\pi} \frac{(\phi_{\zeta}(\xi))^{n-1} \, \phi_{\zeta}'(\xi)}{\overline{(\phi_{\zeta}'(0))^{n}}}.
\]

\noindent We can check that $\phi_{\zeta}'(\xi) = \frac{\vert \zeta\vert^2 - 1}{(1 - \overline{\zeta}\xi)^2}$, which gives $\phi_{\zeta}'(0) = \vert \zeta\vert^2 - 1$. Therefore,
\[
\tilde{K}_{n}(\xi, \zeta) = \frac{n!}{\pi} \frac{(\zeta - \xi)^{n-1}}{(1 - \overline{\zeta}\xi)^{n-1}} \frac{\vert \zeta\vert^2 - 1}{(1 - \overline{\zeta}\xi)^2} \frac{1}{(\vert \zeta\vert^2 - 1)^n}.
\]
Thus, for $n \ge 1$, the $n$-th order reduced Bergman kernel is given by
\begin{equation}\label{RBKforUnitDisc}
\tilde{K}_{n}(\xi, \zeta) = \frac{n!}{\pi} \frac{(\xi - \zeta)^{n-1}}{(1 - \overline{\zeta}\xi)^{n+1} (1 - \vert \zeta\vert^2)^{n-1}}.
\end{equation}
\end{ex}

\medskip

The following is an equivalent definition of the $1$-st order weighted reduced Bergman kernel. We shall use both definitions interchangeably. 

\begin{defn}
Let $D$ be a domain in $\mathbb{C}$ and $\mu \in \text{AW}(D)$. The weighted reduced Bergman space of $D$ is the space of all the square-integrable holomorphic functions on $D$ whose primitive exists on $D$, i.e., 
\[
\mathcal{D}^{\mu}(D)= \left\{f \in \mathcal{O}(D): f = g' \ \text{for some} \ g\in \mathcal{O}(D) \,\, \text{and} \,\,  \int_{D} \lvert f(z)\rvert^2 \mu(z) dA(z) < \infty \right\}.
\]
\medskip

\noindent This is a Hilbert space with respect to the inner product
\[
 \langle f, g \rangle := \int_{D}f(z) \overline{g(z)} \mu(z) dA(z). 
\]
For every $\zeta\in D$, the evaluation functional
\[
f \mapsto f(\zeta),\quad\quad f\in\mathcal{D}^{\mu}(D)
\]
is a bounded linear functional, and therefore $\mathcal{D}^{\mu}(D)$ is a reproducing kernel Hilbert space. The reproducing kernel of $\mathcal{D}^{\mu}(D)$, denoted by $\tilde{K}_{D, \mu}(\cdot, \cdot)$, is called the ($1$-st order) weighted reduced Bergman kernel of $D$ with respect to the weight $\mu$. It satisfies the reproducing property:
\[
f(\zeta) = \int_{D}f(z) \overline{\tilde{K}_{D, \mu}(z, \zeta)} \mu(z) dA(z)
\]
for all $f\in\mathcal{D}^{\mu}(D)$ and $\zeta\in D$. 
\end{defn}

\begin{rem}
It will easily follow from the above definition that $\tilde{K}_{D,\mu}$ is holomorphic in the first variable, anti-holomorphic in the second variable, and that $\tilde{K}_{D,\mu}\in C^{\infty}(D\times D)$.

\medskip

It is known (see \cite[p. 26]{Bergman}, \cite[p. 476]{Burbea}) that for a domain $D\subset\mathbb{C}$, $\mu\in AW(D)$, and $n\geq 2$, 
\begin{equation}\label{determinant}
\tilde{K}_{D,\mu,n}(z,\zeta)=
\frac{(-1)^{n-1}}{J_{n-2}}
\det\left(
\begin{matrix}
\tilde{K}_{0,\bar{0}}(z,\zeta)& \ldots & \tilde{K}_{0,\overline{n-1}}(z,\zeta)\\
\tilde{K}_{0,\bar{0}}& \ldots & \tilde{K}_{0,\overline{n-1}}\\
\tilde{K}_{1,\bar{0}}& \ldots & \tilde{K}_{1,\overline{n-1}}\\
\vdots & &\vdots\\
\tilde{K}_{n-2,\bar{0}}& \ldots & \tilde{K}_{n-2,\overline{n-1}}
\end{matrix}
\right),
\end{equation}
where $J_{n}=\det\left(\tilde{K}_{j\bar{k}}\right)_{j,k=0}^n$ and 
\[
\tilde{K}_{j\bar{k}}(z,\zeta)=\frac{\partial^{j+k}}{\partial z^j\partial\bar{\zeta}^k}\tilde{K}_{D, \mu}(z,\zeta),\quad \tilde{K}_{j\bar{k}}\equiv \tilde{K}_{j\bar{k}}(\zeta,\zeta).
\]
Here $J_n>0$ for all $\zeta\notin N_{D}(\mu):=\{z\in D: \tilde{K}_{D,\mu}(z,z)=0\}$. Thus, $\tilde{K}_{D,\mu,n}\in C^{\infty}(D\times (D\setminus N_{D}(\mu))$. If $\mu\in L^1(D)$, then $N_{D}(\mu)=\emptyset$, and therefore $\tilde{K}_{D,\mu,n}\in C^{\infty}(D\times D)$.
\end{rem}

\noindent\textbf{Notation: } For a non-negative integer $p$, we denote $\tilde{K}^{(p)}_{D,\mu,n}(z,\zeta) :=\frac{\partial^{p}}{\partial z^p}\tilde{K}_{D,\mu,n}(z,\zeta)$.

\begin{rem}\label{extremal} Let $D\subset\mathbb{C}$ be a bounded domain, $\mu\in L^1(D)$ be an admissible weight on $D$, $\zeta\in D$ and $n$ be a positive integer.
\begin{enumerate}
\item[1.] Note that,
$
(M_{D,\mu,n}(\cdot,\zeta))^{(n)}(\zeta)=\Vert  M_{D,\mu,n}(\cdot,\zeta)\Vert^2_{AD^{\mu}(D,\zeta^n)}.
$
Thus, $(M_{D,\mu,n}(\cdot,\zeta))^{(n)}(\zeta)=0$ would imply that $M_{D,\mu,n}(\cdot,\zeta)\equiv 0$ and hence $f^{(n)}(\zeta)=0$ for all $f\in AD^{\mu}(D,\zeta^n)$. This is not true since $D$ is bounded and $\mu\in L^1(D)$. Hence, $(M_{D,\mu,n}(\cdot,\zeta))^{(n)}(\zeta)> 0$.

\item[2.] Consider the extremal problem
$
\min\{\Vert f\Vert_{AD^{\mu}(D,\zeta^n)}:f\in AD^{\mu}(D,\zeta^n),\,f^{(n)}(\zeta)=1\}.
$
Then
\[
g(z)=\frac{M_{D,\mu,n}(z,\zeta)}{\tilde{K}^{(n-1)}_{D,\mu,n}(\zeta,\zeta)},\quad z\in D
\]
is the unique function solving the extremal problem with $\Vert g\Vert_{AD^{\mu}(D,\zeta^n)}=\sqrt{\tilde{K}_{D,\mu,n}^{(n-1)}(\zeta,\zeta)}$.
\item[3.] Consider the extremal problem $\max\{\vert f^{(n)}(\zeta)\vert:f\in AD^{\mu}(D,\zeta^n),\,\Vert f\Vert_{AD^{\mu}(D,\zeta^n)}=1\}$. Then
\[
h(z)=\frac{M_{D,\mu,n}(z,\zeta)}{\sqrt{\tilde{K}^{(n-1)}_{D,\mu,n}(\zeta,\zeta)}},\quad z\in D
\]
is the unique function solving the extremal problem such that $h^{(n)}(\zeta)>0$. Here, $h^{(n)}(\zeta)=\sqrt{\tilde{K}_{D,\mu,n}^{(n-1)}(\zeta,\zeta)}$.
\end{enumerate}
\end{rem}

\medskip

Ramadanov \cite{Rama} showed that for a sequence of domains $D_j \subset \mathbb{C}$ such that $D_j \subset D_{j+1}$ for all $j \in \mathbb{Z}^+$, and $D := \bigcup_{j =1}^{\infty} D_j$, the Bergman kernel $K_{j}(\cdot, \cdot)$ corresponding to the domain $D_j$ converges uniformly on compacts of $D \times D$ to the Bergman kernel $K(\cdot, \cdot)$ corresponding to the domain $D$. Over the years, different versions of Ramadanov type theorems have been proved for the Bergman kernels and the weighted Bergman kernels (see, for instance, \cite{Ram1}, \cite{Ram2}), for different types of convergence of the sequence of domains. Let us prove some of these Ramadanov type theorems for the $n$-th order weighted reduced Bergman kernels.

\begin{thm}\label{Ramadanov}
Let $\{D_i\}_{i=1}^{\infty}$ be a sequence of domains in $\mathbb{C}$ with $\mu_i\in AW(D_i)$. 
\begin{enumerate}
\item Set $D:=\cup_{i} D_i$ and let $\mu\in AW(D)$. Assume that for any $i\in\mathbb{Z}^+$, there exists a $j=j(i)\in\mathbb{Z}^+$ such that $D_i\subset D_k$ and $\mu_i(z)\leq \mu_k(z)\leq\mu(z)$ for all $k\geq j(i)$ and $z\in D_i$. If $\mu_i\longrightarrow\mu$ pointwise a.e. on $D$ then

\begin{enumerate}
\item[(A)] we have
\[
\lim\limits_{i\rightarrow\infty} \tilde{K}_{D_i, \mu_i} = \tilde{K}_{D, \mu}
\]
locally uniformly on $D\times D$.

\medskip

\item[(B)] for $n >1$ and $w\in D$,
\[
\lim\limits_{i\rightarrow\infty} \tilde{K}_{D_i,\mu_i,n}(\cdot,w) = \tilde{K}_{D,\mu,n}(\cdot,w)
\]
locally uniformly on $D$. In particular, $\tilde{K}_{D_i, \mu_i,n}$ converges to $\tilde{K}_{D,\mu,n}$ pointwise on $D\times D$. Furthermore,
all the derivatives of $\tilde{K}_{D_i, \mu_i,n}$ converge locally uniformly to the respective derivatives of $\tilde{K}_{D,\mu,n}$ on $D\times (D\setminus N_D(\mu))$.
\end{enumerate}

\item Suppose that $D=\cap_{i}D_i$ is a domain and $\mu \in AW(D)$ be such that $\mu(z) \le \mu_{i}(z)$ for all $i \in \mbb{Z}^+$ and $z \in D$. Assume that $\mu_{i} \longrightarrow \mu$ pointwise a.e. on $D$ and that for every fixed $t\in D$, we have
\[
\lim\limits_{i\rightarrow\infty} \tilde{K}_{D_i,\mu_i}(t,t)=\tilde{K}_{D,\mu}(t,t).
\]
Then,
\begin{enumerate}
\item[(A)] we have
\[
\lim\limits_{i\rightarrow\infty} \tilde{K}_{D_i, \mu_i} = \tilde{K}_{D, \mu}
\]
locally uniformly on $D\times D$.

\medskip

\item[(B)] for $n >1$, all the derivatives of $\tilde{K}_{D_i, \mu_i,n}$ converge locally uniformly to the respective derivatives of $\tilde{K}_{D,\mu,n}$ on $D\times (D\setminus N_D(\mu))$.
\end{enumerate}
\end{enumerate}
\end{thm}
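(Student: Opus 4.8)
The plan is to treat part (1) first, then bootstrap to part (2), and in each case to reduce everything to the first-order statement (A), since the higher-order statements (B) follow from (A) through the determinantal formula \eqref{determinant} together with the standard fact that locally uniform convergence of holomorphic (resp. pluriharmonic) functions passes to all derivatives.

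For part (1)(A), I would run the classical Ramadanov argument adapted to the reduced Bergman space $\mathcal{D}^{\mu}(D)$. First, the hypothesis that for each $i$ there is $j(i)$ with $D_i \subset D_k$ and $\mu_i \le \mu_k \le \mu$ on $D_i$ for $k \ge j(i)$ gives a nested-in-the-limit structure: restricting an element of $\mathcal{D}^{\mu_k}(D_k)$ to $D_i$ and comparing norms shows $\|\cdot\|_{\mathcal{D}^{\mu}(D)} \le \|\cdot\|_{\mathcal{D}^{\mu_k}(D_k)}$ on the relevant restricted functions, so via the extremal characterization (Remark \ref{extremal}, case $n=1$) one gets the monotone-type inequality $\tilde K_{D_i,\mu_i}(w,w) \ge \tilde K_{D_k,\mu_k}(w,w) \ge \tilde K_{D,\mu}(w,w)$ for $w \in D_i$, $k \ge j(i)$; hence $\tilde K_{D_i,\mu_i}(w,w)$ is, for fixed $w$, eventually decreasing and bounded below, so it converges to some limit $\ge \tilde K_{D,\mu}(w,w)$. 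Next, the diagonal bound plus the reproducing property gives a locally uniform bound on $\{\tilde K_{D_i,\mu_i}(\cdot,w)\}$, so by Montel's theorem (applied to the holomorphic functions $\tilde K_{D_i,\mu_i}(\cdot, w)$) every subsequence has a further subsequence converging locally uniformly; I would identify any such limit with $\tilde K_{D,\mu}(\cdot,w)$ by checking it lies in $\mathcal{D}^{\mu}(D)$ (using Fatou, the weight convergence $\mu_i \to \mu$ a.e., and the norm bound) and satisfies the reproducing property against a dense set of test functions — here one approximates $f \in \mathcal{D}^{\mu}(D)$ by its restrictions, uses dominated convergence in the integral $\int f \overline{\tilde K_{D_i,\mu_i}}\mu_i$ (dominated because $\mu_i \le \mu$ and the kernels are uniformly bounded on compacts), and passes to the limit. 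Uniqueness of the reproducing kernel then forces the full sequence to converge, and standard arguments upgrade pointwise-on-the-diagonal to locally uniform on $D \times D$.

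For part (1)(B): with $n \ge 2$, formula \eqref{determinant} expresses $\tilde K_{D_i,\mu_i,n}$ as a rational expression in the derivatives $\tilde K_{j\bar k}$ of the first-order kernel, with denominator $J_{n-2}(\zeta,\zeta)$, which is positive for $\zeta \notin N_{D_i}(\mu_i)$. Part (1)(A) gives locally uniform convergence of $\tilde K_{D_i,\mu_i} \to \tilde K_{D,\mu}$ on $D \times D$; since these are holomorphic in $z$ and anti-holomorphic in $\zeta$, all partial derivatives $\tilde K_{j\bar k}$ converge locally uniformly as well, hence so do the $J_m$. On $D \times (D \setminus N_D(\mu))$ the limiting denominator $J_{n-2}(\zeta,\zeta) > 0$ is bounded away from zero on compacts, so the quotient — and all its derivatives — converge locally uniformly there. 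On all of $D \times D$ one at least gets pointwise convergence in $z$ for fixed $w$ (restricting to the $z$-slice and using that $\tilde K_{D,\mu,n}(\cdot,w)$ is characterized by a Riesz-representation/reproducing property in $AD^{\mu}(D,w^n)$ analogous to the first-order case), which is the remaining assertion.

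For part (2): here $D = \cap_i D_i$ with $\mu \le \mu_i$ on $D$, and we are handed the diagonal convergence $\tilde K_{D_i,\mu_i}(t,t) \to \tilde K_{D,\mu}(t,t)$ as a hypothesis — this is exactly the ingredient that in part (1) came for free from monotonicity, so (2)(A) follows by repeating the Montel-plus-identify-the-limit argument above (now restricting elements of $\mathcal{D}^{\mu}(D)$ is not available directly since $D \subset D_i$; instead one restricts the kernels $\tilde K_{D_i,\mu_i}(\cdot,w)$ to $D$, bounds them locally uniformly using the supplied diagonal convergence, extracts a locally uniform limit, checks membership in $\mathcal{D}^{\mu}(D)$ via Fatou and $\mu \le \mu_i$, and verifies the reproducing property by dominated convergence — dominated now because on $D$ we test against $f \in \mathcal{D}^{\mu}(D)$ and $\mu \le \mu_i$ lets us dominate $|f|^2\mu_i$ issues through the uniform kernel bounds on compacts). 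Then (2)(B) again follows from \eqref{determinant} and differentiation of locally uniform limits, exactly as in (1)(B).

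The main obstacle I expect is the identification of the subsequential Montel limit with the true reduced Bergman kernel: one must confirm that the limit function genuinely lies in the reduced space $\mathcal{D}^{\mu}(D)$ (i.e. has a holomorphic primitive and is $\mu$-square-integrable — the square-integrability via Fatou is routine, but the primitive must be shown to survive the limit) and that the reproducing identity passes to the limit under the varying weights $\mu_i$, which is where the domination hypotheses $\mu_i \le \mu$ (part 1) and $\mu \le \mu_i$ (part 2) are used in an essential but slightly different way in each half. Everything downstream of first-order convergence — the higher-order statements — is then a formal consequence of \eqref{determinant} and the Weierstrass/Montel machinery and should be routine.
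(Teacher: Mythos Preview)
Your overall architecture---monotonicity $\Rightarrow$ Montel/normal family $\Rightarrow$ identify the subsequential limit $\Rightarrow$ determinant formula \eqref{determinant} for the higher-order derivatives---matches the paper, but the identification step is done differently and your version has a soft spot. You propose to verify the reproducing identity $\int_D f\,\overline{\tilde K}\,\mu = f(w)$ by passing to the limit in $\int_{D_i} f\,\overline{\tilde K_{D_i,\mu_i}}\,\mu_i$ via dominated convergence, ``dominated because $\mu_i\le\mu$ and the kernels are uniformly bounded on compacts''; but uniform boundedness on compacts does not give an $L^1$ majorant on all of $D$, so DCT as stated does not apply. (It can be salvaged by a compact-exhaustion plus Cauchy--Schwarz tail estimate using $\|\tilde K_{D_i,\mu_i}(\cdot,w)\|^2_{\mu_i}=\tilde K_{D_i,\mu_i}(w,w)\le$ const., but this is more work than you indicate.) The paper sidesteps this entirely with a short extremal characterization: any $g$ in the relevant space with $g(w)\ge 0$, $\|g\|^2\le g(w)$, and $g(w)\ge \tilde K_{D,\mu}(w,w)$ must coincide with $\tilde K_{D,\mu}(\cdot,w)$. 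Since Fatou already gives $\|\tilde K(\cdot,w)\|^2\le \tilde K(w,w)$ and monotonicity (in part~(1)) or the assumed diagonal convergence (in part~(2)) gives $\tilde K(w,w)\ge \tilde K_{D,\mu}(w,w)$, the identification is immediate---no testing against arbitrary $f$ is needed. A second difference: the paper states this characterization for general $n$ and runs the Montel/identification argument simultaneously for all $n\ge 1$, which directly yields the locally uniform convergence $\tilde K_{D_i,\mu_i,n}(\cdot,w)\to \tilde K_{D,\mu,n}(\cdot,w)$ for \emph{every} $w\in D$ asserted in (1)(B); your route through \eqref{determinant} only reaches $w\notin N_D(\mu)$, and your fallback sketch for the remaining $w$ gives only ``pointwise'' rather than the claimed locally uniform convergence. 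Finally, the primitive-survives-the-limit issue you flag is resolved in the paper exactly as one would guess: integrate over an arbitrary closed curve $\gamma\Subset D$, note each $\int_\gamma \tilde K_{D_k,\mu_k,n}(\cdot,w)=0$ since $\gamma\subset D_k$ eventually, and pass to the limit by uniform convergence on $\gamma$.
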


\begin{rem}
If $\mu\in L^1(D)$ then $N_D(\mu)=\emptyset$. Therefore, all the derivatives of $\tilde{K}_{D_i, \mu_i,n}$ converge uniformly to the respective derivatives of $\tilde{K}_{D,\mu,n}$ on all the compact subsets of $D\times D$ under the given hypothesis.
\end{rem}

We have the following corollary about the convergence of the corresponding kernel functions $M_{D_i,\mu_i,n}$ of the domains $D_i$.

\begin{cor}\label{Cor_ramadanov}
Let $(D_i, \mu_i, D, \mu)$ for $i \in \mathbb{Z}^+$ be as in Theorem \ref{Ramadanov} $(1)$, then 
for $n\geq 1$ and for all $\xi \in D$, the $n$-th order kernel function $M_{D_i,\mu_i,n}(\cdot, \xi)$ converges locally uniformly on the domain $D$ to the kernel function $M_{D,\mu,n}(\cdot, \xi)$.

\medskip

\noindent If $(D_i, \mu_i, D, \mu)$ are as in Theorem \ref{Ramadanov} $(2)$, then
\begin{enumerate}
\item for all $\xi \in D$, the $1$-st order kernel function $M_{D_i,\mu_i}(\cdot, \xi)$ converges locally uniformly on the domain $D$ to the kernel function $M_{D, \mu}(\cdot, \xi)$.
\item for $n>1$ and for all $\xi \in D\setminus N_{D}(\mu)$, the $n$-th order kernel function $M_{D_i,\mu_i,n}(\cdot, \xi)$ converges locally uniformly on the domain $D$ to the kernel function $M_{D,\mu,n}(\cdot, \xi)$.
\end{enumerate}
\end{cor}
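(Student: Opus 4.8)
The plan is to reconstruct each kernel function $M_{D,\mu,n}(\cdot,\xi)$ from the reduced kernel $\tilde{K}_{D,\mu,n}(\cdot,\xi)$ by integration, and then to push the convergence already supplied by Theorem~\ref{Ramadanov} through that integration. The relevant structural fact is that, by the very definition of $AD^{\mu}(D,\xi^n)$, the function $M_{D,\mu,n}(\cdot,\xi)$ is the unique holomorphic primitive on $D$ of $\tilde{K}_{D,\mu,n}(\cdot,\xi)=\partial_z M_{D,\mu,n}(\cdot,\xi)$ that vanishes at $\xi$; in particular $M_{D,\mu,n}(\xi,\xi)=0$, and the same holds with $D,\mu$ replaced by $D_i,\mu_i$. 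Hence, for any path $\gamma_z$ from $\xi$ to $z$ inside $D_i$,
\[
M_{D_i,\mu_i,n}(z,\xi)=\int_{\gamma_z}\tilde{K}_{D_i,\mu_i,n}(w,\xi)\,dw,
\]
the value being independent of the particular $\gamma_z\subset D_i$, and likewise for the limiting domain $D$.

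For part~(1), I would fix $\xi\in D$ and a compact $L\subset D$, and choose a compact, path-connected set $L'$ with $\{\xi\}\cup L\subset L'\subset D$ for which every $z\in L$ can be joined to $\xi$ inside $L'$ by a path of length at most a constant $C=C(L')$; by the eventual-nesting hypothesis of Theorem~\ref{Ramadanov}~(1), $L'\subset D_i$ for all large $i$. Applying the displayed identity with $\gamma_z\subset L'$, subtracting from the corresponding identity for $D$, and bounding the integral by (length)$\times$(sup of the integrand) gives
\[
\sup_{z\in L}\bigl|M_{D_i,\mu_i,n}(z,\xi)-M_{D,\mu,n}(z,\xi)\bigr|\le C\sup_{w\in L'}\bigl|\tilde{K}_{D_i,\mu_i,n}(w,\xi)-\tilde{K}_{D,\mu,n}(w,\xi)\bigr|,
\]
whose right-hand side tends to $0$ by Theorem~\ref{Ramadanov}~(1)(A) if $n=1$ and by (1)(B) if $n>1$; this is exactly the asserted locally uniform convergence on $D$.

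Part~(2) runs along the same lines. Since $D=\cap_i D_i\subset D_i$ for all $i$, any compact $L\subset D$ already lies in every $D_i$, so no exhaustion argument is needed; one again takes $L'$ compact, path-connected, with $\{\xi\}\cup L\subset L'\subset D$ and uniformly bounded path lengths, and the same representation and estimate apply. For $n=1$ one uses Theorem~\ref{Ramadanov}~(2)(A), which holds for every $\xi\in D$, giving statement~(1) of the corollary; for $n>1$ one uses Theorem~\ref{Ramadanov}~(2)(B), where the convergence of the reduced kernels is asserted only over $D\times(D\setminus N_D(\mu))$, which is precisely the source of the restriction $\xi\in D\setminus N_D(\mu)$ in statement~(2).

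I do not expect a real obstacle: the corollary is essentially a formal consequence of Theorem~\ref{Ramadanov}. The only mildly technical points are routine — the elementary construction of the compact path-connected set $L'\subset D$ with a uniform bound on the lengths of connecting paths (assemble it from finitely many closed discs in $D$ joined by polygonal arcs in $D$), and the path-independence of the integral defining $M_{D_i,\mu_i,n}(\cdot,\xi)$, which is immediate from the existence of the global holomorphic primitive.
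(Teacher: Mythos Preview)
Your proposal is correct. The paper actually does not write out a separate proof of the corollary at all; it is stated as a direct consequence of Theorem~\ref{Ramadanov}, and the natural way to extract it is precisely the integration argument you give---recover $M_{D_i,\mu_i,n}(\cdot,\xi)$ as the primitive of $\tilde K_{D_i,\mu_i,n}(\cdot,\xi)$ vanishing at $\xi$, and pass the locally uniform convergence of the $\tilde K$'s through the line integral. So your approach is essentially the intended one made explicit.
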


\medskip

Moving ahead to the second part of this note, we study the boundary behaviour of $\tilde{K}^{(n-1)}_{D,\mu,n}(z,z)$ as $z$ approaches $\partial D$. We will start by studying two main ingredients, namely, the localization of $\tilde{K}^{(n-1)}_{D,\mu,n}(z,z)$ and the transformation formula for $\tilde{K}^{(n-1)}_{D,\mu,n}$ under biholomorphisms.

\medskip

For a domain $D \subset \mathbb{C}$ and a point $p \in \pa D$, let $\mathcal{A}(D, p)$ denote the collection of all admissible weights $\mu$ on $D$ that are in $L^{\infty}(D)$ and extend continuously to $p$ with $\mu(p) > 0$.

\begin{defn}\label{peak point}
Let $D\subset\mbb{C}$ be a domain. A boundary point $p\in\partial D$ is called a holomorphic local peak point of $D$ if there is a neighborhood $W$ of $p$ in $\mathbb{C}$ and a function $h$ such that
\begin{enumerate}
\item[(i)] $h$ is continuous on $\ov{D}\cap W$,
\item[(ii)] $h$ is holomorphic on $D\cap W$,
\item[(iii)] $\vert h(z)\vert <1$ for all $z\in (\ov{D}\cap W)\setminus\{p\}$ and $h(p)=1$.
\end{enumerate}
\end{defn}

\begin{thm}\label{LocalizationHigherDerivatives} 
Let $D$ be a bounded domain in $\mathbb{C}$, $p\in\partial D$, $\mu\in\mathcal{A}(D,p)$ and $n$ be a positive integer. There exists a neighborhood $U$ of $p$ in $\mathbb{C}$ such that $D\cap U$ is connected and $\mu\geq c$ a.e. on $D\cap U$ for some constant $c>0$. Hence, the $n$-th order weighted reduced Bergman kernel $\tilde{K}_{D\cap U,\mu,n}$ of $D\cap U$ with respect to the weight $\mu\vert_{D\cap U}$ is well-defined.

Suppose that $p$ is a holomorphic local peak point, and there exists a neighborhood $N_p$ of $p$ in $\mathbb{C}$ such that $D\cap N_p$ is simply connected. Then,
\[
\lim_{\zeta\rightarrow p}\frac{\tilde{K}^{(n-1)}_{D\cap U,\mu,n}(\zeta,\zeta)}{\tilde{K}^{(n-1)}_{D,\mu,n}(\zeta,\zeta)}=1.
\]
\end{thm}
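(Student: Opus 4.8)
The plan is to use the extremal characterization from Remark \ref{extremal}(3) to reduce the localization to a one-variable comparison of minimal norms, together with the peak function to cut off globally defined competitors. Write $\Phi_D(\zeta) = \tilde{K}^{(n-1)}_{D,\mu,n}(\zeta,\zeta)$, which by Remark \ref{extremal}(3) equals $\big(\max\{|f^{(n)}(\zeta)| : f\in AD^{\mu}(D,\zeta^n),\ \|f\|=1\}\big)^2$. Since $D\cap U\subset D$ and every $f\in AD^{\mu}(D\cap U,\zeta^n)$ need not extend, the inequality in one direction is the subtle one; the other direction is almost immediate. First I would establish the \emph{easy inequality} $\Phi_{D}(\zeta)\ge \Phi_{D\cap U}(\zeta)$ for $\zeta$ near $p$: given the extremal $g$ on $D$ with $\|g\|_{AD^{\mu}(D,\zeta^n)}=1$ and $g^{(n)}(\zeta)=\sqrt{\Phi_D(\zeta)}$, restrict $g$ to $D\cap U$; since $\mu\ge c>0$ there and the full $D$-integral is finite, $g|_{D\cap U}\in AD^{\mu}(D\cap U,\zeta^n)$ with norm $\le 1$, so $\Phi_{D\cap U}(\zeta)\ge |g^{(n)}(\zeta)|^2/\|g|_{D\cap U}\|^2\ge \Phi_D(\zeta)$ — wait, this actually gives $\Phi_{D\cap U}(\zeta)\ge \Phi_D(\zeta)$, so the ratio $\Phi_{D\cap U}/\Phi_D \ge 1$ trivially. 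Hence the real content is the \emph{reverse inequality} $\limsup_{\zeta\to p}\Phi_{D\cap U}(\zeta)/\Phi_D(\zeta)\le 1$.

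For the reverse inequality I would take the extremal function $g_\zeta$ on $D\cap U$ (normalized by $\|g_\zeta\|_{AD^{\mu}(D\cap U,\zeta^n)}=1$, $g_\zeta^{(n)}(\zeta)=\sqrt{\Phi_{D\cap U}(\zeta)}$) and manufacture from it a competitor on all of $D$. Let $h$ be the holomorphic local peak function at $p$ on a neighborhood $W$, and fix a slightly smaller neighborhood $V\Subset W$ with $p\in V$; choose a smooth cutoff $\chi$ equal to $1$ near $p$ and supported in $V$. The candidate is $F_\zeta = \chi\cdot g_\zeta$, corrected by solving a $\bar\partial$-problem: set $u_\zeta = \bar\partial(\chi g_\zeta)=g_\zeta\,\bar\partial\chi$, supported in an annular region bounded away from $p$, and solve $\bar\partial v_\zeta = u_\zeta$ with $L^2(\mu)$-estimates on $D$ (here I use that $D\cap N_p$ is simply connected to ensure $g_\zeta$ has a primitive locally, so the objects $M$ rather than $\tilde K$ are the right ones to transplant). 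Multiplying by a high power $h^m$ of the peak function forces the correction term and its low-order derivatives at $\zeta$ to be negligible as $\zeta\to p$, since $|h|<1$ strictly away from $p$ while $h(\zeta)\to 1$; this is the standard Hörmander-type localization trick. The resulting $F_\zeta$ lies in $AD^{\mu}(D,\zeta^n)$, has norm $1+o(1)$, and $F_\zeta^{(n)}(\zeta) = \sqrt{\Phi_{D\cap U}(\zeta)}(1+o(1))$, giving $\Phi_D(\zeta)\ge \Phi_{D\cap U}(\zeta)(1+o(1))$, i.e. the ratio is $\le 1+o(1)$.

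I expect the main obstacle to be making the "correction by $\bar\partial$ plus peak-function damping" argument work at the level of \emph{$n$-th derivatives at the moving point $\zeta$}, rather than just values: one must check that multiplying by $h^m$ does not destroy the normalization $F_\zeta^{(n)}(\zeta)\approx g_\zeta^{(n)}(\zeta)$, which requires choosing $m=m(\zeta)\to\infty$ slowly enough that $h(\zeta)^m\to 1$ while $\sup_{D\setminus V'}|h|^m\to 0$ fast enough to kill the $\bar\partial$-data and the correction's derivatives up to order $n$ at $\zeta$ (using interior Cauchy estimates, since $\zeta$ stays a definite distance from the support of $\bar\partial\chi$). A secondary technical point is that we are working with the spaces $AD^{\mu}$ whose norm only sees the \emph{derivative}; so one should transplant $M_{D\cap U,\mu,n}(\cdot,\zeta)$ (whose derivative is $\tilde K$) rather than $\tilde K$ itself, arranging the cutoff and $\bar\partial$-solve at the level of primitives — the simple connectivity of $D\cap N_p$ is exactly what lets the local primitive exist and be cut off. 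Once these bookkeeping issues are handled, combining the two inequalities yields the limit $1$. I would also remark that the existence of the neighborhood $U$ with $D\cap U$ connected and $\mu\ge c$ comes directly from $\mu\in\mathcal{A}(D,p)$: continuity of $\mu$ at $p$ with $\mu(p)>0$ gives a ball on which $\mu\ge \mu(p)/2$, and shrinking ensures connectedness of the intersection.
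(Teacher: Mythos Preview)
Your overall strategy matches the paper's: the easy inequality $\tilde K^{(n-1)}_{D\cap U,\mu,n}(\zeta,\zeta)\ge \tilde K^{(n-1)}_{D,\mu,n}(\zeta,\zeta)$ by monotonicity/restriction, and the hard inequality by cutting off the extremal function on $D\cap U$, damping with a power of the peak function $h$, and correcting via a $\bar\partial$-solve. However, two technical choices you make differ from the paper's and, as written, leave genuine gaps.

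\textbf{Level of the $\bar\partial$-problem and the primitive issue.} You propose to cut off and solve $\bar\partial$ at the level of the primitive $M_{D\cap U,\mu,n}(\cdot,\zeta)$. The competitor you produce then has the form $F=\chi M h^m-v$, and to test it in $AD^{\mu}(D,\zeta^n)$ you need control of $\lVert F'\rVert_{L^2_\mu(D)}$, hence of $\lVert v'\rVert_{L^2}$; H\"ormander's estimate only gives $\lVert v\rVert_{L^2}$, so this step does not close. The paper instead works at the level of the \emph{derivative}: it cuts off $f'h^k$ (with $f$ the normalized extremal on $D\cap U$), solves $\bar\partial g_k=\bar\partial(\chi f'h^k)$ on the simply-connected hull $\hat D$ obtained by filling the bounded complementary components of $D$, and then $\hat F_k=\chi f'h^k-g_k\in\mathcal O(\hat D)$ automatically has a primitive $F_k$ on $\hat D\supset D$. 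This is where simple connectivity is actually used --- globally, on $\hat D$, not locally on $D\cap N_p$ as you suggest. Your remark that ``simple connectivity of $D\cap N_p$ lets the local primitive exist and be cut off'' misidentifies the role of the hypothesis.

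\textbf{Vanishing of the correction and the order of limits.} You anticipate that arranging $F^{(j)}(\zeta)=0$ for $j<n$ is the delicate point, and propose interior Cauchy estimates plus a diagonal choice $m=m(\zeta)\to\infty$. The paper avoids both complications: it solves $\bar\partial g_k=\alpha_k$ with the subharmonic weight $\phi(z)=(2n+2)\log|z-\zeta|$, so finiteness of $\int|g_k|^2|z-\zeta|^{-(2n+2)}\,dA$ \emph{forces} $g_k(\zeta)=\cdots=g_k^{(n-1)}(\zeta)=0$ exactly. Moreover, $k$ is kept fixed: one first lets $\zeta\to p$ (so $|h(\zeta)|\to 1$), obtaining $\limsup_{\zeta\to p}(\text{ratio})\le(1+C_0 a^k)^2$ with $a=\sup_{\overline{(U\setminus U_0)\cap D}}|h|<1$, and only afterwards lets $k\to\infty$. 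This iterated-limit argument is both simpler and more robust than your diagonal scheme, and it sidesteps your worry about $h(\zeta)^{m(\zeta)}\to 1$ versus $a^{m(\zeta)}\to 0$.
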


\medskip

Let us see an example where we study boundary behaviour of these kernel functions explicitly using the above localization before studying it for general domains.

\begin{ex}
Consider the domain $D = \mbb{D} \setminus \cup_{i=1}^{N} \overline{D(q_i, r_i)}$ for $N\in\mathbb{Z}^+$, such that $\overline{D(q_i, r_i)} \cap \overline{D(q_j, r_j)} = \emptyset$, if $i \neq j$, where $q_i \in \mbb{D}$, and $\overline{D(q_i, r_i)} \subset \mbb{D}$ for all $1\leq i\leq N$. We want to study the behaviour of the kernel functions on the diagonal near the boundary of $D$. Let $p \in \partial \mbb{D}$, i.e., $\vert p\vert =1$, and observe that for a sufficiently small neighourhood $U$ of $p$, we have $D \cap U = \mbb{D} \cap U$. Therefore, the above localization theorem \ref{LocalizationHigherDerivatives} tells us that 
\begin{equation}\label{Eqn1}
\lim_{z \to p} {\frac{\tilde{K}_{D,n}^{(n-1)}(z,z)}{\tilde{K}_{\mbb{D},n}^{(n-1)}(z,z)}} = 1.
\end{equation}

\noindent Now using the Leibniz rule to differentiate the equation (\ref{RBKforUnitDisc}), with respect to the variable $z$ upto $(n-1)$ times, we get
\[
\tilde{K}_{\mbb{D},n}^{(n-1)}(z,w) = \frac{1}{\pi (1 - \vert w\vert^2)^{n-1}} \sum_{k =0}^{n-1} {\binom {n-1}{k}^2 (n+k)! \, (n-k-1)! \, \frac{\ov{w}^{k} (z-w)^{k}}{(1 - \ov{w}z)^{n+k+1}}}.
\]
Taking $z=w$, all the terms in the above summation vanishes except for $k=0$, so we get
\[
\tilde{K}_{\mbb{D},n}^{(n-1)}(z,z) = \frac{n! \, (n-1)!}{\pi (1 - \vert z\vert^2)^{2n}}.
\]
Using this in the equation (\ref{Eqn1}), we get
\begin{equation}\label{Eqn2}
\lim_{z \to p} \left((1 - \vert z\vert^2)^{2n} \, \tilde{K}_{D,n}^{(n-1)}(z,z)\right) = \frac{n! \, (n-1)!}{\pi}.
\end{equation}
This is true for every finitely connected domain $D \subset \mbb{D}$ such that $\partial \mbb{D} \subset \partial D$, and for every $p \in \partial \mbb{D}$.

Now we would like to study these kernel functions near other boundary components. As we can see $\partial D = \partial \mbb{D} \cup \left(\cup_{i = 1}^{N} \partial D(q_i, r_i)\right)$. Take a point $p_i \in \partial D(q_i, r_i) = \{\vert z - q_i\vert = r_i\}$. Consider the univalent holomorphic map $f : D \rightarrow \mbb{D}$ given by $f(z) = \frac{r_i}{z - q_i}$. One can easily see that $\tilde{D} := f(D) \subset \mbb{D}$ is an $(N+1)-$connected domain with $\partial \mbb{D} \subset \partial\tilde{D}$ (since $f(\{\vert z - q_i\vert = r_i\}) = \{\vert w\vert = 1\}$). Therefore equation (\ref{Eqn2}) holds true for the domain $\tilde{D}$. Now using the transformation formula proved in Theorem \ref{Trans}, we get
\[
\tilde{K}_{D,n}^{(n-1)}(z,z) = \vert f'(z)\vert^{2n} \, \tilde{K}_{\tilde{D},n}^{(n-1)}\left(\frac{r_i}{z-q_i}, \frac{r_i}{z-q_i}\right).
\]
Note that $f'(z) = \frac{-r_i}{(z-q_i)^2}$. Now multiplying both sides by $\left(1 - \lvert \frac{r_i}{z-q_i}\rvert^2\right)^{2n}$, and taking the limit $z \to p_i$, we get
\begin{multline*}
\lim_{z \to p_i} {\left(1 - \left\vert \frac{r_i}{z-q_i}\right\vert^2\right)^{2n} \tilde{K}_{D,n}^{(n-1)}(z,z)}\\
=\,
\lim_{z \to p_i} \frac{r^{2n}}{\vert z - q_i\vert^{4n}} \left(1 - \left\vert \frac{r_i}{z-q_i}\right\vert^2\right)^{2n} \tilde{K}_{\tilde{D},n}^{(n-1)}\left(\frac{r_i}{z-q_i}, \frac{r_i}{z-q_i}\right).
\end{multline*}
Using the change of variables $\tilde{z} = \frac{r_i}{z - q_i}$, with $\tilde{p}_i = \frac{r_i}{p_i - q_i} \in \partial \mbb{D}$, we get
\begin{multline*}
\lim_{z \to p_i} \frac{r_i^{2n}}{\vert z - q_i\vert^{4n}} \left(1 - \left\vert \frac{r_i}{z-q_i}\right\vert^2\right)^{2n} \tilde{K}_{\tilde{D},n}^{(n-1)}\left(\frac{r_i}{z-q_i}, \frac{r_i}{z-q_i}\right)\\
=\,
\frac{1}{r_i^{2n}} \lim_{\tilde{z} \to \tilde{p}_i} \vert \tilde{z}\vert^{4n} \, (1 - \vert \tilde{z}\vert^2)^{2n} \, \tilde{K}_{\tilde{D},n}^{(n-1)}(\tilde{z},\tilde{z}).
\end{multline*}
Now using the fact that $\vert \tilde{p}_i\vert = 1$, and equation (\ref{Eqn2}) for the domain $\tilde{D}$ gives us 
\begin{equation}\label{Eqn3}
\lim_{z \to p_i} {\left(1 - \left\vert \frac{r_i}{z-q_i}\right\vert^2\right)^{2n} \tilde{K}_{D,n}^{(n-1)}(z,z)} = \frac{n! \, (n-1)!}{\pi r_{i}^{2n}}.
\end{equation}
\end{ex}

\begin{thm}\label{Trans}
Let $D_1$, $D_2$ be domains in $\mathbb{C}$ and $n$ be a positive integer. Let $\nu$ be an admissible weight on $D_2$ and $f:D_1\rightarrow D_2$ be a biholomorphism. Then,
\[
(\overline{f'(\zeta)})^n \, M_{D_2,\nu,n}(f(z),f(\zeta))=M_{D_1,\nu\circ f,n}(z,\zeta),\quad z,\zeta\in D_1.
\]
Consequently, we have
\[
(f'(z))^n\,\tilde{K}^{(n-1)}_{D_2,\nu,n}(f(z),f(\zeta))\,(\overline{f'(\zeta)})^n=\tilde{K}_{D_1,\nu\circ f,n}^{(n-1)}(z,\zeta),\quad z,\zeta\in D_1.
\]
\end{thm}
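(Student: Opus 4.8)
The plan is to prove the stronger identity, the one for the functions $M_{D_1,\nu\circ f,n}$, by writing down the natural candidate for the relevant Riesz representer and invoking its uniqueness; the identity for $\tilde{K}^{(n-1)}$ then drops out by differentiation. Throughout set $w:=f(\zeta)$.

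\emph{Pullback by $f$.} For $g\in AD^{\nu}(D_2,w^n)$, write $g(u)=(u-w)^n h(u)$ with $h\in\mathcal{O}(D_2)$. Since $f'(\zeta)\neq 0$, the composite $g\circ f$ vanishes to order at least $n$ at $\zeta$, and the change of variables $u=f(z)$ (so $dA(u)=|f'(z)|^2\,dA(z)$ and $(g\circ f)'(z)=g'(f(z))f'(z)$) gives
\[
\int_{D_1}|(g\circ f)'(z)|^2\,\nu(f(z))\,dA(z)=\int_{D_2}|g'(u)|^2\,\nu(u)\,dA(u).
\]
Hence $T\colon g\mapsto g\circ f$ is an isometry of $AD^{\nu}(D_2,w^n)$ into $AD^{\nu\circ f}(D_1,\zeta^n)$; running the same computation for $f^{-1}$ shows it is onto, hence a Hilbert space isomorphism (here one also uses $\nu\circ f\in AW(D_1)$, which transfers from $\nu\in AW(D_2)$ along the biholomorphism $f$). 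Comparing the leading Taylor coefficients at $\zeta$ in $g(f(z))=(f'(\zeta))^n h(w)(z-\zeta)^n+O((z-\zeta)^{n+1})$ with $g^{(n)}(w)=n!\,h(w)$ yields the transformation rule for the $n$-th derivative functionals:
\[
(g\circ f)^{(n)}(\zeta)=(f'(\zeta))^n\,g^{(n)}(f(\zeta)),\qquad g\in AD^{\nu}(D_2,w^n).
\]

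\emph{Identifying the representer.} Put $G(z):=(\overline{f'(\zeta)})^n\,M_{D_2,\nu,n}(f(z),w)$. Since $M_{D_2,\nu,n}(\cdot,w)\in AD^{\nu}(D_2,w^n)$, the change of variables above shows $G\in AD^{\nu\circ f}(D_1,\zeta^n)$ with $\|G\|^2=|f'(\zeta)|^{2n}\|M_{D_2,\nu,n}(\cdot,w)\|^2<\infty$, and the chain rule gives $G'(z)=(\overline{f'(\zeta)})^n f'(z)\,\tilde{K}_{D_2,\nu,n}(f(z),w)$. Now fix any $F\in AD^{\nu\circ f}(D_1,\zeta^n)$ and write $F=g\circ f$ via surjectivity of $T$. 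Using the transformation rule for $n$-th derivative functionals, the reproducing property of $M_{D_2,\nu,n}$, the change of variables $u=f(z)$, and $|f'(z)|^2=f'(z)\overline{f'(z)}$ (absorbing the factor $(f'(\zeta))^n\overline{f'(z)}$ into the conjugate as $\overline{(\overline{f'(\zeta)})^n f'(z)}$), one computes
\[
F^{(n)}(\zeta)=(f'(\zeta))^n g^{(n)}(w)=(f'(\zeta))^n\!\int_{D_2}\!g'(u)\,\overline{\tilde{K}_{D_2,\nu,n}(u,w)}\,\nu(u)\,dA(u)=\int_{D_1}\!F'(z)\,\overline{G'(z)}\,\nu(f(z))\,dA(z).
\]
As this holds for all $F\in AD^{\nu\circ f}(D_1,\zeta^n)$, uniqueness of the Riesz representer forces $M_{D_1,\nu\circ f,n}(\cdot,\zeta)=G$, which is the first identity of the theorem.

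\emph{The kernel identity, and the main obstacle.} Since $\tilde{K}^{(n-1)}_{D,\mu,n}(z,\zeta)=\frac{\partial^n}{\partial z^n}M_{D,\mu,n}(z,\zeta)$, I would apply $\frac{\partial^n}{\partial z^n}$ to $M_{D_1,\nu\circ f,n}(z,\zeta)=(\overline{f'(\zeta)})^n M_{D_2,\nu,n}(f(z),w)$; expanding by the higher-order chain rule (Fa\`a di Bruno), every summand other than the principal one $(f'(z))^n(\partial_u^n M_{D_2,\nu,n})(f(z),w)$ carries a factor $(\partial_u^m M_{D_2,\nu,n})(f(z),w)$ with $m\le n-1$, and these are killed by the order-$n$ vanishing of $M_{D_2,\nu,n}(\cdot,w)$ at $w$, leaving $(f'(z))^n\,\tilde{K}^{(n-1)}_{D_2,\nu,n}(f(z),f(\zeta))\,(\overline{f'(\zeta)})^n=\tilde{K}^{(n-1)}_{D_1,\nu\circ f,n}(z,\zeta)$. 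The real work is in the pullback step: matching all three pieces of data — the norm identity, the order-$n$ vanishing at $\zeta$, and the transformation rule $(g\circ f)^{(n)}(\zeta)=(f'(\zeta))^n g^{(n)}(f(\zeta))$ for the $n$-th derivative functional — so that $G$ genuinely represents $F\mapsto F^{(n)}(\zeta)$ on the whole space; the surjectivity of $T$ is exactly what permits testing against an arbitrary $F$, and the placement of the complex conjugates in the change-of-variables computation has to be handled with care. The remaining change-of-variables estimates and the Fa\`a di Bruno bookkeeping are routine.
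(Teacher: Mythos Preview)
Your proof of the first identity is correct and is essentially the paper's argument: show that pullback by $f$ is a Hilbert-space isomorphism $AD^{\nu}(D_2,f(\zeta)^n)\to AD^{\nu\circ f}(D_1,\zeta^n)$, record the rule $(g\circ f)^{(n)}(\zeta)=(f'(\zeta))^n g^{(n)}(f(\zeta))$, and identify the reproducing element by uniqueness. The paper runs the change of variables in the opposite direction (it starts from $M_{D_1,\nu\circ f,n}$ and shows that $(\overline{f'(\zeta)})^{-n}M_{D_1,\nu\circ f,n}(f^{-1}(\cdot),\zeta)$ reproduces on $D_2$), but this is only a cosmetic difference. The admissibility of $\nu\circ f$, which you wave through, is checked in the paper by a short direct argument.

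There is, however, a real gap in your derivation of the second identity. In the Fa\`a di Bruno expansion of $\partial_z^n\bigl[M_{D_2,\nu,n}(f(z),w)\bigr]$ you assert that every term carrying a factor $(\partial_u^m M_{D_2,\nu,n})(f(z),w)$ with $m\le n-1$ is ``killed by the order-$n$ vanishing of $M_{D_2,\nu,n}(\cdot,w)$ at $w$.'' But that vanishing is at the point $u=w=f(\zeta)$, while the derivatives in question are evaluated at $u=f(z)$; for $z\neq\zeta$ these lower-order derivatives are generically nonzero, and the cross terms survive (for instance, already at $n=2$ one picks up $f''(z)\,\tilde{K}_{D_2,\nu,2}(f(z),f(\zeta))$). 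Your argument therefore only yields the kernel identity on the diagonal $z=\zeta$. The paper's own proof of this step is no more detailed---it simply says ``differentiate $n$ times''---and in fact every application of the second formula in the paper is on the diagonal, where your Fa\`a di Bruno reasoning is perfectly valid; but as written, your justification of the off-diagonal statement does not go through.
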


\begin{thm}\label{Boundary behaviour}
Let $D\subset\mbb{C}$ be a bounded domain and $p\in\partial D$. Assume that $\partial D$ is $C^2$-smooth near $p$. If $\psi$ is a $C^2$-defining function for $D$ near $p$ with $\frac{\partial \psi}{\partial z}(p)=1$, then for a positive integer $n$ 
\[
\tilde{K}_{D,n}^{(n-1)}(z,z)\sim \frac{n!(n-1)!}{\pi}\frac{1}{(\psi(z))^{2n}}\approx \frac{1}{(\delta(z))^{2n}} \quad \text{as} \quad z\rightarrow p.
\]
\end{thm}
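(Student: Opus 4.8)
\medskip

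\noindent\emph{Proposed strategy.} The plan is to obtain the asymptotics from three things already available: the localization theorem (Theorem~\ref{LocalizationHigherDerivatives}), the transformation formula (Theorem~\ref{Trans}), and the explicit diagonal value $\tilde K^{(n-1)}_{\mbb{D},n}(w,w)=\dfrac{n!\,(n-1)!}{\pi(1-|w|^2)^{2n}}$ obtained by differentiating $(\ref{RBKforUnitDisc})$. Throughout I take the admissible weight $\mu\equiv 1$; since $D$ is bounded this lies in $\mathcal{A}(D,p)\cap L^1(D)$, so $N_D(\mu)=\emptyset$.

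\emph{Step 1: localize.} First I would verify the hypotheses of Theorem~\ref{LocalizationHigherDerivatives} at $p$. Since $\pa D$ is $C^2$ near $p$ it has an exterior tangent disc $\cdar\subset\C\setminus D$ meeting $\ov D$ only at $p$ inside a small neighborhood; the normalized M\"obius map $z\mapsto \ov{r/(p-a)}\cdot r/(z-a)$ is holomorphic across $\ov D$, equals $1$ at $p$, and has modulus $<1$ elsewhere on $\ov D$ near $p$, so $p$ is a holomorphic local peak point. Also a small disc $N_p$ about $p$ is split by the $C^2$ arc $\pa D\cap N_p$ into two Jordan pieces, one of which is $D\cap N_p$, so $D\cap N_p$ is simply connected. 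With the neighborhood $U$ from Theorem~\ref{LocalizationHigherDerivatives} (shrunk so that $U\subset N_p$, $D\cap U$ is connected and simply connected, and $\pa D\cap U$ is the only part of $\pa(D\cap U)$ near $p$) we obtain
\[
\lim_{\zeta\to p}\frac{\tilde K^{(n-1)}_{D\cap U,n}(\zeta,\zeta)}{\tilde K^{(n-1)}_{D,n}(\zeta,\zeta)}=1,
\]
so it suffices to prove the claimed asymptotics for $\tilde K^{(n-1)}_{D\cap U,n}$.

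\emph{Step 2: transport to the disc.} As $D\cap U$ is a bounded simply connected domain there is a Riemann map $F\colon D\cap U\to\mbb{D}$. Near $p$ the boundary of $D\cap U$ is the $C^2$ arc $\pa D$, so by the local boundary regularity of conformal maps (Kellogg--Warschawski) $F$ extends to a $C^1$ map up to the boundary in a neighborhood of $p$, with $F(p)=:p'\in\pa\mbb{D}$ and $F'(p)\neq 0$. Applying Theorem~\ref{Trans} with $D_1=D\cap U$, $D_2=\mbb{D}$, weight $\nu\equiv 1$ (so $\nu\circ F\equiv 1$) and $\zeta=z$, and inserting the disc formula, gives
\[
\tilde K^{(n-1)}_{D\cap U,n}(z,z)=|F'(z)|^{2n}\,\tilde K^{(n-1)}_{\mbb{D},n}\!\bigl(F(z),F(z)\bigr)=|F'(z)|^{2n}\,\frac{n!\,(n-1)!}{\pi\bigl(1-|F(z)|^2\bigr)^{2n}}.
\]

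\emph{Step 3: compare defining functions and conclude.} The role of $\tfrac{\pa\psi}{\pa z}(p)=1$ is that it forces $|\nabla\psi(p)|=2$. On the other hand $\rho(z):=1-|F(z)|^2$ is a $C^1$ defining function for $D\cap U$ near $p$, and since the gradient of $1-|w|^2$ has length $2$ on $\pa\mbb{D}$ and $F$ is conformal, the chain rule gives $|\nabla\rho(p)|=2|F'(p)|$. Because the ratio of two $C^1$ defining functions of the same domain extends continuously up to the boundary with boundary value the ratio of gradient lengths, $|\rho(z)|/|\psi(z)|\to|F'(p)|$ as $z\to p$, hence $(\psi(z))^{2n}/(1-|F(z)|^2)^{2n}\to|F'(p)|^{-2n}$. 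Multiplying the identity of Step~2 by $(\psi(z))^{2n}$ and letting $z\to p$ (using $|F'(z)|\to|F'(p)|$),
\[
\lim_{z\to p}(\psi(z))^{2n}\,\tilde K^{(n-1)}_{D\cap U,n}(z,z)=\frac{n!\,(n-1)!}{\pi},
\]
and Step~1 lets me replace $D\cap U$ by $D$, i.e. $\tilde K^{(n-1)}_{D,n}(z,z)\sim\frac{n!(n-1)!}{\pi}(\psi(z))^{-2n}$. Finally $\psi$ is $C^1$ with non-vanishing gradient near $p$, so $|\psi(z)|\approx\delta(z)$, whence $\tilde K^{(n-1)}_{D,n}(z,z)\approx\delta(z)^{-2n}$.

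\emph{Main obstacle.} The hard point is the boundary behaviour of $F$ at $p$ — that it extends to a $C^1$ map with $F'(p)\neq 0$ — since the exact constant $n!(n-1)!/\pi$ survives Steps~2 and~3 only because $|\nabla(1-|F|^2)(p)|$ is precisely $2|F'(p)|$; this is where the $C^2$ (equivalently $C^{1,\alpha}$) smoothness of $\pa D$ is genuinely used. Checking the peak-point and local simple-connectivity hypotheses needed for Theorem~\ref{LocalizationHigherDerivatives} is comparatively routine.
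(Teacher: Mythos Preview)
Your argument is correct and complete, but the simply-connected step follows a genuinely different route from the paper's. After the same localization reduction (your Step~1 matches the paper's Case~2), the paper handles the simply-connected domain via the \emph{scaling principle}: one picks $p_j\to p$, rescales by the affine maps $T_j(z)=(z-p_j)/(-\psi(p_j))$, shows that the domains $D_j=T_j(D)$ converge to the half-space $\mathcal H=\{-1+2\Re z<0\}$, and then proves by a normal-families argument that the normalized Riemann maps $F_j:D_j\to\mathcal H$ converge locally uniformly to the identity; the transformation formula and the explicit value $\tilde K^{(n-1)}_{\mathcal H,n}(0,0)=n!(n-1)!/\pi$ then give the limit. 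You instead fix one Riemann map $F:D\cap U\to\mathbb D$, invoke local Kellogg--Warschawski to obtain $C^1$ regularity of $F$ at $p$ with $F'(p)\neq 0$, and finish with a first-order comparison of the two defining functions $\psi$ and $1-|F|^2$. The trade-off: the scaling approach is self-contained (only Montel and open mapping are used) and mirrors the technique common in higher-dimensional boundary asymptotics, but requires setting up the rescaling machinery; your approach is shorter and more transparent once Kellogg--Warschawski is accepted as a black box, and makes the cancellation of the $|F'(p)|^{2n}$ factor completely explicit.
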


\medskip

Here, $\delta(z)$ denotes the Euclidean distance of $z$ from $\partial D$. For functions $f$ and $g$ on $D$, the notation $f\sim \lambda g$ as $z\rightarrow p$ means that $f(z)/g(z)\rightarrow \lambda$ as $z\rightarrow p$, and $f\approx g$ as $z\rightarrow p$ means that $f/g$ is bounded above and below by positive constants in some neighborhood of $p$ in $\ov{D}$.

\medskip

\begin{thm}\label{Boundary behaviour (w)}
Let $D \subset \mathbb{C}$ be a bounded domain, $p \in \partial D$ and $\nu\in\mathcal{A}(D,p)$. Assume that $\partial D$ is $C^2$-smooth near $p$. Then
\[
\tilde{K}_{D,\nu,n}^{(n -1)}(z,z) \sim \frac{1}{\nu(p)}\, \tilde{K}_{D,n}^{(n -1)}(z,z) 
\]
as $z\rightarrow p$.
\end{thm}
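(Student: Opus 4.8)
The idea is to reduce to a small neighbourhood of $p$ via Theorem \ref{LocalizationHigherDerivatives}, on which the weight $\nu$ is nearly equal to the constant $\nu(p)$, and then to compare with the unweighted kernel by a squeeze. The only feature of $\tilde{K}^{(n-1)}_{\Omega,\mu,n}(\zeta,\zeta)$ I would need is its dependence on the weight $\mu$, and this comes from the extremal description in Remark \ref{extremal}: for a bounded domain $\Omega$, an admissible $\mu\in L^{1}(\Omega)$ and $\zeta\in\Omega$,
\[
\tilde{K}^{(n-1)}_{\Omega,\mu,n}(\zeta,\zeta)=\big(M_{\Omega,\mu,n}(\cdot,\zeta)\big)^{(n)}(\zeta)=\Big(\min\big\{\|f\|^{2}_{AD^{\mu}(\Omega,\zeta^{n})}:\ f\in AD^{\mu}(\Omega,\zeta^{n}),\ f^{(n)}(\zeta)=1\big\}\Big)^{-1}>0 .
\]
Two consequences: \emph{monotonicity} — if $\mu_{1}\le\mu_{2}$ a.e.\ on $\Omega$, then $AD^{\mu_{2}}(\Omega,\zeta^{n})\subseteq AD^{\mu_{1}}(\Omega,\zeta^{n})$ and the $\mu_{1}$-norm is there dominated by the $\mu_{2}$-norm, so the extremal minimum can only grow and $\tilde{K}^{(n-1)}_{\Omega,\mu_{2},n}(\zeta,\zeta)\le\tilde{K}^{(n-1)}_{\Omega,\mu_{1},n}(\zeta,\zeta)$; and \emph{scaling} — for a constant $c>0$ one has $AD^{c}(\Omega,\zeta^{n})=AD^{1}(\Omega,\zeta^{n})$ with $\|\cdot\|^{2}_{AD^{c}}=c\,\|\cdot\|^{2}_{AD^{1}}$, whence $M_{\Omega,c,n}(\cdot,\zeta)=c^{-1}M_{\Omega,1,n}(\cdot,\zeta)$ and $\tilde{K}^{(n-1)}_{\Omega,c,n}(\zeta,\zeta)=c^{-1}\tilde{K}^{(n-1)}_{\Omega,n}(\zeta,\zeta)$.

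Next I would set up the localization. Since $\partial D$ is $C^{2}$-smooth near $p$, $D$ enjoys an exterior tangent disc at $p$, which provides a holomorphic local peak function for $D$ at $p$, and $D$ meets every sufficiently small ball around $p$ in a simply connected set; thus the hypotheses of Theorem \ref{LocalizationHigherDerivatives} hold at $p$. Fix $\epsilon\in(0,1)$. Using that $\nu$ is continuous at $p$ with $\nu(p)>0$, choose a ball $U_{\epsilon}\ni p$ so small that $D\cap U_{\epsilon}$ is connected and simply connected and $(1-\epsilon)\nu(p)\le\nu\le(1+\epsilon)\nu(p)$ a.e.\ on $D\cap U_{\epsilon}$. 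Then $\nu|_{D\cap U_{\epsilon}}$ is admissible on $D\cap U_{\epsilon}$ (being bounded below by $(1-\epsilon)\nu(p)>0$), and the constant weight $1$ belongs to $\mathcal{A}(D,p)\cap L^{1}(D)$ as $D$ is bounded. Applying Theorem \ref{LocalizationHigherDerivatives} with localizing neighbourhood $U_{\epsilon}$, once for $\nu$ and once for the weight $1$, gives
\[
\lim_{\zeta\to p}\frac{\tilde{K}^{(n-1)}_{D\cap U_{\epsilon},\nu,n}(\zeta,\zeta)}{\tilde{K}^{(n-1)}_{D,\nu,n}(\zeta,\zeta)}=1=\lim_{\zeta\to p}\frac{\tilde{K}^{(n-1)}_{D\cap U_{\epsilon},n}(\zeta,\zeta)}{\tilde{K}^{(n-1)}_{D,n}(\zeta,\zeta)} .
\]

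Finally I would run the squeeze. On $\Omega=D\cap U_{\epsilon}$, monotonicity and scaling applied to the sandwich $(1-\epsilon)\nu(p)\le\nu\le(1+\epsilon)\nu(p)$ give
\[
\frac{1}{(1+\epsilon)\nu(p)}\ \le\ \frac{\tilde{K}^{(n-1)}_{D\cap U_{\epsilon},\nu,n}(\zeta,\zeta)}{\tilde{K}^{(n-1)}_{D\cap U_{\epsilon},n}(\zeta,\zeta)}\ \le\ \frac{1}{(1-\epsilon)\nu(p)},\qquad \zeta\in D\cap U_{\epsilon} .
\]
Multiplying the middle ratio by $\tilde{K}^{(n-1)}_{D,\nu,n}(\zeta,\zeta)/\tilde{K}^{(n-1)}_{D\cap U_{\epsilon},\nu,n}(\zeta,\zeta)$ and by $\tilde{K}^{(n-1)}_{D\cap U_{\epsilon},n}(\zeta,\zeta)/\tilde{K}^{(n-1)}_{D,n}(\zeta,\zeta)$, each of which tends to $1$ as $\zeta\to p$ by the previous display, and taking $\liminf$ and $\limsup$, I obtain
\[
\frac{1}{(1+\epsilon)\nu(p)}\ \le\ \liminf_{\zeta\to p}\frac{\tilde{K}^{(n-1)}_{D,\nu,n}(\zeta,\zeta)}{\tilde{K}^{(n-1)}_{D,n}(\zeta,\zeta)}\ \le\ \limsup_{\zeta\to p}\frac{\tilde{K}^{(n-1)}_{D,\nu,n}(\zeta,\zeta)}{\tilde{K}^{(n-1)}_{D,n}(\zeta,\zeta)}\ \le\ \frac{1}{(1-\epsilon)\nu(p)} .
\]
As $\epsilon\in(0,1)$ is arbitrary, the limit exists and equals $1/\nu(p)$, i.e.\ $\tilde{K}^{(n-1)}_{D,\nu,n}(z,z)\sim\nu(p)^{-1}\,\tilde{K}^{(n-1)}_{D,n}(z,z)$ as $z\to p$, which is the assertion. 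The one substantive input is the localization Theorem \ref{LocalizationHigherDerivatives}, already available; the only delicate bookkeeping point is that the localizing ball $U_{\epsilon}$ must be allowed to shrink with $\epsilon$, so one needs to know that the localization conclusion holds for \emph{every} sufficiently small localizing neighbourhood (not merely one particular choice), which is why the argument is phrased through $\liminf$ and $\limsup$ rather than on a single fixed neighbourhood.
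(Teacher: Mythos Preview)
Your proposal is correct and follows essentially the same approach as the paper: localize near $p$ via Theorem~\ref{LocalizationHigherDerivatives}, sandwich the weight $\nu$ between the constant weights $(1\pm\epsilon)\nu(p)$ on the localized domain, use monotonicity and the scaling $\tilde{K}^{(n-1)}_{\Omega,c,n}=c^{-1}\tilde{K}^{(n-1)}_{\Omega,n}$, and let $\epsilon\to0$. The only difference is cosmetic: the paper applies localization once (for $\nu$) and then invokes the unweighted boundary asymptotic of Theorem~\ref{Boundary behaviour} on the localized domain to pass to $\tilde{K}^{(n-1)}_{D,n}$, whereas you apply localization a second time (for the weight $1$) and compare the two localized kernels directly; your route proves exactly the stated ratio $\sim 1/\nu(p)$ without detouring through the explicit $\psi(z)^{-2n}$ rate, while the paper's route yields that explicit rate as a byproduct. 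Your remark about needing the localization conclusion for \emph{every} sufficiently small neighbourhood (not one fixed $U$) is well taken and is indeed implicit in the proof of Theorem~\ref{LocalizationHigherDerivatives}.
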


\medskip

\noindent \textbf{Acknowledgement.} The authors would like to thank Kaushal Verma for all the helpful discussions and suggestions.

\section{Ramadanov Type Theorems}

We will use the following characterization for the kernel functions.

\begin{lem}\label{charac}
Let $D\subset\mbb{C}$ be a domain, $\mu\in AW(D)$ and $n\in\mbb{Z}^+$. For $w\in D$, let $S^w_{D,\mu,n}\subset AD^{\mu}(D,w^n)$ denote the set of all functions $f$ such that 
\[
f^{(n)}(w)\geq 0
\quad
\text{and}
\quad
\lVert f\rVert_{AD^{\mu}(D,w^n)}\leq\sqrt{f^{(n)}(w)}.   
\] 
Then the $n$-th order weighted kernel function $M_{D,\mu,n}(\cdot,w)$ is uniquely characterized by the properties:
\begin{enumerate}
\item[(i)] $M_{D,\mu,n}(\cdot,w)\in S_{D,\mu,n}^w$;
\item[(ii)] if $f\in S^w_{D,\mu,n}$ and $f^{(n)}(w)\geq \tilde{K}^{(n-1)}_{D,\mu,n}(w,w)$, then $f(\cdot)\equiv M_{D,\mu,n}(\cdot,w)$.
\end{enumerate}
\end{lem}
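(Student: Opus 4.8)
The plan is to reduce the whole statement to the Riesz identity $f^{(n)}(w)=\langle f,M\rangle_{AD^{\mu}(D,w^n)}$ that defines $M:=M_{D,\mu,n}(\cdot,w)$, together with the bookkeeping identity
\[
\tilde{K}^{(n-1)}_{D,\mu,n}(w,w)=M^{(n)}(w)=\langle M,M\rangle_{AD^{\mu}(D,w^n)}=\lVert M\rVert^2_{AD^{\mu}(D,w^n)}=:\lambda .
\]
Here the first equality just unwinds the definitions: $\tilde{K}^{(n-1)}_{D,\mu,n}(z,\zeta)=\partial_z^n M_{D,\mu,n}(z,\zeta)$ by the definition of $\tilde K_{D,\mu,n}$ and of the superscript notation, evaluated at $z=\zeta=w$; and the remaining equalities are Riesz applied to $f=M$. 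After that the argument is pure Hilbert space geometry in $AD^{\mu}(D,w^n)$.

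First I would dispatch (i): since $M^{(n)}(w)=\lambda\ge 0$ and $\lVert M\rVert=\sqrt{\lambda}=\sqrt{M^{(n)}(w)}$, the function $M$ satisfies both defining conditions of $S^w_{D,\mu,n}$, so $M\in S^w_{D,\mu,n}$ (in fact with equality in the norm bound).

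For (ii), take $f\in S^w_{D,\mu,n}$ with $f^{(n)}(w)\ge\lambda$. Since $f\in S^w_{D,\mu,n}$, the quantity $\langle f,M\rangle_{AD^{\mu}(D,w^n)}=f^{(n)}(w)$ is real and nonnegative, so
\[
\lVert f-M\rVert^2=\lVert f\rVert^2-2f^{(n)}(w)+\lambda\le f^{(n)}(w)-2f^{(n)}(w)+\lambda=\lambda-f^{(n)}(w)\le 0,
\]
using $\lVert f\rVert^2\le f^{(n)}(w)$ (membership in $S^w_{D,\mu,n}$) and then $f^{(n)}(w)\ge\lambda$. Hence $\lVert f-M\rVert=0$, so $(f-M)'\equiv 0$ on $D$ (as $\mu>0$), i.e. $f-M$ is a constant; since $f,M\in AD^{\mu}(D,w^n)$ both vanish at $w$, that constant is $0$, so $f\equiv M$. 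I would also record in passing the Cauchy--Schwarz bound $f^{(n)}(w)=\langle f,M\rangle\le\lVert f\rVert\,\lVert M\rVert\le\sqrt{f^{(n)}(w)}\sqrt{\lambda}$, which gives $f^{(n)}(w)\le\lambda$ for every $f\in S^w_{D,\mu,n}$; this says $M$ is the unique maximizer of $f\mapsto f^{(n)}(w)$ over $S^w_{D,\mu,n}$, which is the real content of the characterization.

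It remains to see that (i) and (ii) pin $M$ down. If $g$ is any function with $g\in S^w_{D,\mu,n}$ and the property that every $f\in S^w_{D,\mu,n}$ with $f^{(n)}(w)\ge\tilde K^{(n-1)}_{D,\mu,n}(w,w)=\lambda$ satisfies $f\equiv g$, then applying this property to $f=M$ — legitimate since $M\in S^w_{D,\mu,n}$ by step (i) and $M^{(n)}(w)=\lambda$ — forces $g\equiv M$. The only place care is needed is the bookkeeping: remembering that $\lVert\cdot\rVert_{AD^{\mu}(D,w^n)}$ measures the derivative, so vanishing of $\lVert f-M\rVert$ yields only that $f-M$ is constant and one must invoke the $n$-jet conditions at $w$ to conclude $f\equiv M$; and correctly identifying $\tilde K^{(n-1)}_{D,\mu,n}(w,w)$ with $\lVert M\rVert^2$. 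Modulo that, the whole lemma is a two-line computation.
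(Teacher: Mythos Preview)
Your proof is correct and in fact more direct than the paper's. Both establish (i) identically. For (ii), the paper detours through the extremal problem $\min\{\lVert h\rVert_{AD^{\mu}(D,w^n)}: h^{(n)}(w)=1\}$ from Remark~\ref{extremal}: it normalizes $f$ to $f/f^{(n)}(w)$, observes that this competes in the minimization, compares its norm to that of the unique minimizer $M/\lambda$, and invokes uniqueness of the minimizer --- with a separate case analysis when $f^{(n)}(w)=0$. Your one-line computation $\lVert f-M\rVert^2\le \lambda - f^{(n)}(w)\le 0$ bypasses all of this, using nothing beyond the Riesz identity $\langle f,M\rangle = f^{(n)}(w)$, and absorbs the degenerate case $\lambda=0$ without special treatment. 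For uniqueness, the paper argues by comparing $\varphi_1^{(n)}(w)$ and $\varphi_2^{(n)}(w)$ (implicitly reading the threshold in (ii) as the candidate's own $n$-th derivative at $w$); your version --- apply the candidate $g$'s property (ii) to $f=M$ --- is the literal reading of the statement and is cleaner. What the paper's route buys is an explicit link back to the extremal description of the kernel; what yours buys is brevity and no external input.
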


\begin{proof}
Let $\varphi_1,\varphi_2\in AD^{\mu}(D,w^n)$ satisfy $(i)$ and $(ii)$. Either $\varphi_1^{(n)}(w)\geq \varphi_2^{(n)}(w)$ or $\varphi_2^{(n)}(w)\geq\varphi_1^{(n)}(w)$. In both the cases, we have $\varphi_1\equiv\varphi_2$. Therefore, the function satisfying both $(i)$ and $(ii)$ is unique. We shall show that $M_{D,\mu,n}(\cdot,w)$ satisfies the two properties. Since 
\[
(M_{D,\mu,n}(\cdot,w))^{(n)}(w)
=
\tilde{K}_{D,\mu,n}^{(n-1)}(w,w)\geq 0\quad\text{and}\quad
\lVert M_{D,\mu,n}(\cdot,w)\rVert_{AD^{\mu}(D,w^n)} =\sqrt{\tilde{K}^{(n-1)}_{D,\mu,n}(w,w)},
\]
property $(i)$ holds. Let $f\in S^w_{D,\mu,n}$ such that $f^{(n)}(w)\geq \tilde{K}_{D,\mu,n}^{(n-1)}(w,w)$. If $f^{(n)}(w)=0$ then $\tilde{K}^{(n-1)}_{D,\mu,n}(w,w)=0$ and therefore $\lVert f\rVert_{AD^{\mu}(D,w^n)}=\lVert M_{D,\mu,n}(\cdot,w)\rVert_{AD^{\mu}(D,w^n)}=0$. Hence, $f$ and $M_{D,\mu,n}(\cdot,w)$ are constant functions on $D$ that vanish at $w\in D$. Thus,
\[
f(\cdot)\equiv M_{D,\mu,n}(\cdot,w)\equiv 0.
\]
So, assume that $f^{(n)}(w)>0$. If $\tilde{K}_{D,\mu,n}^{(n-1)}(w,w)=0$ then by the similar argument as above $M_{D,\mu,n}(\cdot,w)\equiv 0$ and therefore $g^{(n)}(w)=\langle g, M_{D,\mu,n}(\cdot,w)\rangle_{AD^{\mu}(D,w^n)}=0$ for all $g\in AD^{\mu}(D,w^n)$. This is a contradiction as $f^{(n)}(w)\neq 0$. Therefore, $\tilde{K}^{(n-1)}_{D,\mu,n}(w,w)>0$. 

\medskip

Note that the function $f(\cdot)/f^{(n)}(w)$ belongs to the set $\{h\in AD^{\mu}(D,w^n): h^{(n)}(w)=1\}$. Observe that
\[
\left\lVert\frac{f(\cdot)}{f^{(n)}(w)}\right\rVert_{AD^{\mu}(D,w^n)}
\leq
\frac{\sqrt{f^{(n)}(w)}}{f^{(n)}(w)}
=
\frac{1}{\sqrt{f^{(n)}(w)}}
\leq \frac{1}{\sqrt{\tilde{K}^{(n-1)}_{D,\mu,n}(w,w)}}
=\left\lVert \frac{M_{D,\mu,n}(\cdot,w)}{\tilde{K}^{(n-1)}_{D,\mu,n}(w,w)}\right\rVert_{AD^{\mu}(D,w^n)}
\]
By the uniqueness of the minimal problem: $\min\{\lVert h\rVert_{AD^{\mu}(D,w^n)}: h\in AD^{\mu}(D,w^n), h^{(n)}(w)=1\}$, we therefore have
\[
\frac{f(\cdot)}{f^{(n)}(w)}
\equiv 
\frac{M_{D,\mu,n}(\cdot,w)}{\tilde{K}^{(n-1)}_{D,\mu,n}(w,w)}
\quad \text{and} \quad
\frac{1}{\sqrt{f^{(n)}(w)}}
=\frac{1}{\sqrt{\tilde{K}^{(n-1)}_{D,\mu,n}(w,w)}}.
\]
So, $f(\cdot)\equiv M_{D,\mu,n}(\cdot,w)$.
\end{proof}

In a similar manner, we can characterize the $1$-st order reduced Bergman kernel as follows.

\begin{lem}\label{charac1}
Let $D\subset\mathbb{C}$ be a domain, and $\mu$ be an admissible weight on $D$. For $w\in D$, let $\tilde{S}_{D,\mu}^w\subset \mathcal{D}^{\mu}(D)$ denote the set of all functions $f$ such that $f(w)\geq 0$ and $\lVert f\rVert_{L^2_{\mu}(D)}\leq\sqrt{f(w)}$. Then the $1$-st order weighted reduced Bergman kernel function $\tilde{K}_{D,\mu}(\cdot,w)$ is uniquely characterized by the properties:
\begin{enumerate}
\item[(i)] $\tilde{K}_{D,\mu}(\cdot,w)\in \tilde{S}^w_{D,\mu}$;
\item[(ii)] if $f\in \tilde{S}_{D,\mu}^w$ and $f(w)\geq \tilde{K}_{D,\mu}(w,w)$, then $f(\cdot)\equiv \tilde{K}_{D,\mu}(\cdot,w)$.
\end{enumerate}
\end{lem}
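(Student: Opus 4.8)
The plan is to transcribe, almost word for word, the argument used for Lemma~\ref{charac} into the $n=1$ reduced Bergman setting: one replaces $f^{(n)}(w)$ by $f(w)$, the norm $\lVert\cdot\rVert_{AD^{\mu}(D,w^n)}$ by $\lVert\cdot\rVert_{L^2_{\mu}(D)}$, and $M_{D,\mu,n}(\cdot,w)$ together with $\tilde{K}^{(n-1)}_{D,\mu,n}(w,w)$ by $\tilde{K}_{D,\mu}(\cdot,w)$ and $\tilde{K}_{D,\mu}(w,w)$. For uniqueness I would take $\varphi_1,\varphi_2\in\mathcal{D}^{\mu}(D)$ each satisfying $(i)$ and $(ii)$; assuming without loss of generality that $\varphi_1(w)\geq\varphi_2(w)$, property $(i)$ for $\varphi_1$ puts $\varphi_1\in\tilde{S}^w_{D,\mu}$, and then property $(ii)$ for $\varphi_2$ forces $\varphi_1\equiv\varphi_2$. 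Thus at most one function satisfies $(i)$ and $(ii)$.

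Next I would check that $\tilde{K}_{D,\mu}(\cdot,w)$ itself satisfies $(i)$: by the reproducing property $\tilde{K}_{D,\mu}(w,w)=\langle\tilde{K}_{D,\mu}(\cdot,w),\tilde{K}_{D,\mu}(\cdot,w)\rangle=\lVert\tilde{K}_{D,\mu}(\cdot,w)\rVert^2_{L^2_{\mu}(D)}\geq 0$, hence $\lVert\tilde{K}_{D,\mu}(\cdot,w)\rVert_{L^2_{\mu}(D)}=\sqrt{\tilde{K}_{D,\mu}(w,w)}$, so $\tilde{K}_{D,\mu}(\cdot,w)\in\tilde{S}^w_{D,\mu}$.

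For $(ii)$, let $f\in\tilde{S}^w_{D,\mu}$ with $f(w)\geq\tilde{K}_{D,\mu}(w,w)$. If $f(w)=0$ then $\lVert f\rVert_{L^2_{\mu}(D)}=0$, so $f\equiv 0$ (a holomorphic function vanishing a.e.\ on $D$ is identically zero), while $\tilde{K}_{D,\mu}(w,w)=0$ and $(i)$ give $\tilde{K}_{D,\mu}(\cdot,w)\equiv 0$; done. If $f(w)>0$, I first note $\tilde{K}_{D,\mu}(w,w)>0$: otherwise $\tilde{K}_{D,\mu}(\cdot,w)\equiv 0$ and $g(w)=\langle g,\tilde{K}_{D,\mu}(\cdot,w)\rangle=0$ for every $g\in\mathcal{D}^{\mu}(D)$, contradicting $f(w)>0$. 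Then $f/f(w)$ lies in $\{h\in\mathcal{D}^{\mu}(D):h(w)=1\}$ and
\[
\left\lVert\frac{f}{f(w)}\right\rVert_{L^2_{\mu}(D)}\leq\frac{1}{\sqrt{f(w)}}\leq\frac{1}{\sqrt{\tilde{K}_{D,\mu}(w,w)}}=\left\lVert\frac{\tilde{K}_{D,\mu}(\cdot,w)}{\tilde{K}_{D,\mu}(w,w)}\right\rVert_{L^2_{\mu}(D)}.
\]
Since $\tilde{K}_{D,\mu}(\cdot,w)/\tilde{K}_{D,\mu}(w,w)$ is the unique minimizer of $\lVert\cdot\rVert_{L^2_{\mu}(D)}$ over $\{h\in\mathcal{D}^{\mu}(D):h(w)=1\}$ --- which follows from Cauchy--Schwarz applied to $1=h(w)=\langle h,\tilde{K}_{D,\mu}(\cdot,w)\rangle$ in the reproducing kernel Hilbert space $\mathcal{D}^{\mu}(D)$, the equality case pinning $h$ down as a scalar multiple of the kernel --- the displayed inequalities collapse, giving $f/f(w)\equiv\tilde{K}_{D,\mu}(\cdot,w)/\tilde{K}_{D,\mu}(w,w)$ and $f(w)=\tilde{K}_{D,\mu}(w,w)$, whence $f\equiv\tilde{K}_{D,\mu}(\cdot,w)$.

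I expect no genuine obstacle here: this is a routine adaptation of the proof of Lemma~\ref{charac}. The only step meriting a line of justification is the minimal-norm extremal property of $\tilde{K}_{D,\mu}(\cdot,w)$, which is the first-order analogue of Remark~\ref{extremal}(2); and, in contrast to the higher-order case, here a vanishing $L^2_{\mu}(D)$-norm already forces the holomorphic function itself to vanish, so the degenerate case $f(w)=0$ is if anything simpler.
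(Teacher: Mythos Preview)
Your proposal is correct and is precisely the approach the paper takes: the paper's own proof consists of the single sentence ``Similar to the proof of Lemma~\ref{charac},'' and your write-up is exactly that transcription, with the appropriate substitutions and the simpler handling of the degenerate case.
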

\begin{proof}
    Similar to the proof of Lemma \ref{charac}.
\end{proof}

\begin{proof}[Proof of Theorem \ref{Ramadanov} $(1)$]
We will start by proving the monotonicity property: $\tilde{K}_{D_i,\mu_i,n}^{(n-1)}(z,z)\geq \tilde{K}_{D_k,\mu_k,n}^{(n-1)}(z,z)$ for all $z\in D_i$ and $k\geq j(i)$.

Fix $z\in D_i$ and $k\geq j(i)$. If $\tilde{K}_{D_k,\mu_k,n}^{(n-1)}(z,z)=0$ then there is nothing to prove. Therefore, assume that $\tilde{K}^{(n-1)}_{D_k,\mu_k,n}(z,z)>0$. Observe that
\begin{multline*}
\int_{D_i}\left\lvert\frac{(M_{D_k,\mu_k,n}(\cdot,z))'(\zeta)}{\tilde{K}_{D_k,\mu_k,n}^{(n-1)}(z,z)}\right\rvert^2\mu_i(\zeta) dA(\zeta)
\leq
\int_{D_i}\left\lvert\frac{(M_{D_k,\mu_k,n}(\cdot,z))'(\zeta)}{\tilde{K}_{D_k,\mu_k,n}^{(n-1)}(z,z)}\right\rvert^2\mu_k(\zeta) dA(\zeta)\\
\leq
\int_{D_k}\left\lvert\frac{(M_{D_k,\mu_k,n}(\cdot,z))'(\zeta)}{\tilde{K}_{D_k,\mu_k,n}^{(n-1)}(z,z)}\right\rvert^2\mu_k(\zeta) dA(\zeta)
=
\frac{1}{\tilde{K}_{D_k,\mu_k,n}^{(n-1)}(z,z)}< \infty.
\end{multline*}

Therefore $M_{D_k,\mu_k,n}(\cdot,z)/\tilde{K}^{(n-1)}_{D_k,\mu_k,n}(z,z)$ belongs to the set $\{h\in AD^{\mu_i}(D_i,z^n): h^{(n)}(z)=1\}$. Note that $\tilde{K}^{(n-1)}_{D_i,\mu_i,n}(z,z)>0$ as we have a function in $AD^{\mu_i}(D_i,z^n)$ whose $n$-th derivative does not vanish at $z$. 

\medskip

Thus, by the minimality, we have
\begin{multline*}
\frac{1}{\tilde{K}^{(n-1)}_{D_i,\mu_i,n}(z,z)}
=
\int_{D_i}\left\lvert\frac{(M_{D_i,\mu_i,n}(\cdot,z))'(\zeta)}{\tilde{K}_{D_i,\mu_i,n}^{(n-1)}(z,z)}\right\rvert^2\mu_i(\zeta) \, dA(\zeta)\\
\leq
\int_{D_i}\left\lvert\frac{(M_{D_k,\mu_k,n}(\cdot,z))'(\zeta)}{\tilde{K}_{D_k,\mu_k,n}^{(n-1)}(z,z)}\right\rvert^2\mu_i(\zeta) \, dA(\zeta)\leq 
\frac{1}{\tilde{K}^{(n-1)}_{D_k,\mu_k,n}(z,z)}.
\end{multline*}
Hence, $\tilde{K}_{D_i,\mu_i,n}^{(n-1)}(z,z)\geq \tilde{K}^{(n-1)}_{D_k,\mu_k,n}(z,z)$. Since $D_i\subset D$ for all $i\in \mathbb{Z}^+$ and $\mu_i(z)\leq \mu(z)$ for all $z\in D_i$, it can be proved in a similar manner that $\tilde{K}_{D_i,\mu_i,n}^{(n-1)}(z,z)\geq \tilde{K}_{D,\mu,n}^{(n-1)}(z,z)$ for all $i\in\mathbb{Z}^+$ and $z\in D_i$.

\medskip

Now we will show that $\{\tilde{K}_{D_k,\mu_k}\}$ and $\{\tilde{K}_{D_k,\mu_k,n}(\cdot,w)\}$ are normal families for any $w\in D$. 

Let $K\subset D$ be a compact subset. There exists an $i\in\mathbb{Z}^+$ such that $K\subset D_i$, and therefore $K\subset D_i\subset D_k$ for all $k\geq j(i)$. Choose $i\in\mbb{Z}^+$ so that $w\in D_i$. For $z,\zeta\in K$ and $k\geq j(i)$, we have by monotonicity that
\[
\lvert\tilde{K}_{D_k,\mu_k}(z,\zeta)\rvert
\leq
\sqrt{\tilde{K}_{D_k,\mu_k}(z,z)}\sqrt{\tilde{K}_{D_k,\mu_k}(\zeta,\zeta)}
\leq
\sqrt{\tilde{K}_{D_i,\mu_i}(z,z)}\sqrt{\tilde{K}_{D_i,\mu_i}(\zeta,\zeta)}\leq M
\]
and
\begin{eqnarray*}
\vert \tilde{K}_{D_k,\mu_k,n}(z,w)\vert &=&
\vert (M_{D_k,\mu_k,n}(\cdot,w))'(z)\vert
=
\vert \langle (M_{D_k,\mu_k,n}(\cdot,w))',\tilde{K}_{D_k,\mu_k}(\cdot,z)\rangle_{L^2_{\mu_k}(D_k)}\vert\\
&\leq&
\Vert (M_{D_k,\mu_k,n}(\cdot,w))'\Vert_{L^2_{\mu_k}(D_k)}\,\Vert \tilde{K}_{D_k,\mu_k}(\cdot,z)\Vert_{L^2_{\mu_k}(D_k)}\\
&=&
\sqrt{\tilde{K}^{(n-1)}_{D_k,\mu_k,n}(w,w)}\,
\sqrt{\tilde{K}_{D_k,\mu_k}(z,z)}
\leq
\sqrt{\tilde{K}^{(n-1)}_{D_i,\mu_i,n}(w,w)}\,
\sqrt{\tilde{K}_{D_i,\mu_i}(z,z)}
\leq C,
\end{eqnarray*}
where 
\[
M=\sup_{t\in K} \lvert\tilde{K}_{D_i,\mu_i}(t,t)\rvert\geq 0
\quad\text{and}\quad
C=\sup\limits_{t\in K}\sqrt{\tilde{K}_{D_i,\mu_i}(t,t)}\,\sqrt{\tilde{K}^{(n-1)}_{D_i,\mu_i,n}(w,w)} \geq 0.
\]
By Montel's theorem, $\{\tilde{K}_{D_k,\mu_k}\}_{k=1}^{\infty}$ and $\{\tilde{K}_{D_k,\mu_k,n}(\cdot,w)\}_{k=1}^{\infty}$ are normal families. We will show that for $n\geq 1$ and $w\in D$, every convergent subsequence of $\{\tilde{K}_{D_k,\mu_k,n}(\cdot,w)\}_{k=1}^{\infty}$ converges to $\tilde{K}_{D,\mu,n}(\cdot,w)$. This will imply that
\[
\lim\limits_{k\rightarrow\infty} \tilde{K}_{D_k, \mu_k} = \tilde{K}_{D, \mu}
\quad\text{and}\quad
\lim\limits_{k\rightarrow\infty} \tilde{K}_{D_k,\mu_k,n}(\cdot,w) = \tilde{K}_{D,\mu,n}(\cdot,w)
\]
locally uniformly on $D\times D$ and $D$ respectively.

\medskip

So, without loss of generality, assume that 
\[
\lim\limits_{k\rightarrow\infty} \tilde{K}_{D_k,\mu_k,n}(\cdot,w)=\tilde{K}
\]
locally uniformly on $D$ for some holomorphic function $\tilde{K}$. Since $\tilde{K}^{(p)}_{D_k,\mu_k,n}(w,w)=0$ for all $0\leq p\leq n-2$ and large $k$, we have $\tilde{K}^{(p)}(w)=0$ for all $0\leq p\leq n-2$. Let $\gamma\subset\subset D$ be a closed curve. Then $\gamma\subset D_k$ for large enough $k$. Since the holomorphic functions $\tilde{K}_{D_k,\mu_k,n}(\cdot,w)$ have a primitive on $D_k$ and converge locally uniformly to $\tilde{K}$ on $D$,
\[
\int_{\gamma} \tilde{K}(z) dz =\int_{\gamma} \lim_{k\rightarrow\infty} \tilde{K}_{D_k,\mu_k,n}(z,w) dz =\lim_{k\rightarrow\infty} \int_{\gamma} \tilde{K}_{D_k,\mu_k,n}(z,w) dz =0.
\]
Thus, there exists a holomorphic function $M$ on $D$ such that $M'=\tilde{K}$. Choose $M$ so that $M(w)=0$. Therefore, 
\[
M(w)=M'(w)=\cdots=M^{(n-1)}(w)=0.
\]
Let $K\subset D$ be a compact subset. Then $K\subset D_k$ for large enough $k$. By Fatou's lemma,
\begin{eqnarray*}
\int_K \vert M'(z)\vert^2\mu(z) dA(z)
&=&
\int_K \vert\tilde{K}(z)\vert^2\mu(z) dA(z)
\leq
\liminf_{k\rightarrow\infty} \int_K\vert\tilde{K}_{D_k,\mu_k,n}(z,w)\vert^2\mu_k(z) dA(z)
\\
&\leq&
\liminf_{k\rightarrow\infty} \int_{D_k}\vert\tilde{K}_{D_k,\mu_k,n}(z,w)\vert^2\mu_k(z) dA(z)
=
\liminf_{k\rightarrow\infty} \tilde{K}^{(n-1)}_{D_k,\mu_k}(w,w)\\
&=&
\tilde{K}^{(n-1)}(w)=M^{(n)}(w).
\end{eqnarray*}
Since compact $K\subset D$ was arbitrary, we have
\[
\Vert M'\Vert_{L^2_{\mu}(D)}^2=
\int_D \vert M'(z)\vert^2\mu(z) \,dA(z)
\leq 
M^{(n)}(w)<\infty.
\]
Therefore, $M\in S_{D,\mu,n}^w$. Since $\tilde{K}^{(n-1)}_{D_i,\mu_i,n}(w,w)\geq \tilde{K}^{(n-1)}_{D,\mu,n}(w,w)$ for all $i\in\mathbb{Z}^+$, we have that $M^{(n)}(w)=\tilde{K}^{(n-1)}(w)\geq \tilde{K}^{(n-1)}_{D,\mu,n}(w,w)$. Hence, Lemma \ref{charac} implies that $M(\cdot)\equiv M_{D,\mu,n}(\cdot,w)$ and therefore $\tilde{K}=\tilde{K}_{D,\mu,n}(\cdot,w)$.

\medskip

It now follows from the equation (\ref{determinant}) that all the derivatives of $\tilde{K}_{D_i,\mu_i,n}$ are the fractions where the numerator is a polynomial in the derivatives of $\tilde{K}_{D_i,\mu_i}$ and the denominator is some power of $J_{n-2}$ (corresponding to the domain $D_i$ and weight $\mu_i$) which is non-zero on $D_i\times (D_i\setminus N_{D_i}(\mu_i))$ for all $i\in\mbb{Z}^+$.

\medskip

If $K\subset D$ is compact such that $\tilde{K}_{D,\mu}(z,z)\neq 0$ for all $z\in K$, then $\tilde{K}_{D_i,\mu_i}(z,z)\neq 0$ for all large $i$ and $z\in K$. Thus, any compact set in $D\times (D\setminus N_{D}(\mu))$ is eventually contained in $D_i\times (D_i\setminus N_{D_i}(\mu_i))$.

\medskip

Since $\tilde{K}_{D_i,\mu_i}$ are holomorphic in the first and anti-holomorphic in the second variable, all of its derivatives converge locally uniformly to the respective derivatives of $\tilde{K}_{D,\mu}$. Therefore, for $n>1$, all the derivatives of $\tilde{K}_{D_i,\mu_i,n}$ converge locally uniformly to the respective derivatives of $\tilde{K}_{D,\mu,n}$ on $D\times (D\setminus N_{D}(\mu))$.
\end{proof}

\begin{proof}[Proof of Theorem \ref{Ramadanov} $(2)$] 
We will first show part (A).
Let $K\subset D$ be a compact subset. For $z,w\in K$ and $k\in\mathbb{Z}^+$, we have by monotonicity that
\[
\lvert\tilde{K}_{D_k,\mu_k}(z,w)\rvert
\leq
\sqrt{\tilde{K}_{D_k,\mu_k}(z,z)}\sqrt{\tilde{K}_{D_k,\mu_k}(w,w)}
\leq
\sqrt{\tilde{K}_{D,\mu}(z,z)}\sqrt{\tilde{K}_{D,\mu}(w,w)}\leq M,
\]
where $M=\sup_{t\in K} \lvert\tilde{K}_{D,\mu}(t,t)\rvert>0$. By Montel's theorem, $\{K_{D_k,\mu_k}\}_{k=1}^{\infty}$ is a normal family and therefore has a subsequence that converges locally uniformly on $D\times D$. We will show that every such subsequence must converge to $\tilde{K}_{D,\mu}$. 
So, without loss of generality, assume that 
\[
\lim\limits_{k\rightarrow\infty} \tilde{K}_{D_k,\mu_k}=\tilde{K}
\]
locally uniformly on $D\times D$, for some $\tilde{K}$. Fix $w\in D$. Again, let $K\subset D$ be a compact subset. By Fatou's lemma,
\begin{eqnarray*}
\int_K \vert\tilde{K}(z,w)\vert^2\mu(z) \,dA(z)
&\leq&
\liminf_{k\rightarrow\infty} \int_K\vert\tilde{K}_{D_k,\mu_k}(z,w)\vert^2\mu_k(z) \,dA(z)
\\
&\leq&
\liminf_{k\rightarrow\infty} \int_{D_k}\vert\tilde{K}_{D_k,\mu_k}(z,w)\vert^2\mu_k(z) \,dA(z)
\\&=&
\liminf_{k\rightarrow\infty} \tilde{K}_{D_k,\mu_k}(w,w)=\tilde{K}_{D,\mu}(w,w)=\tilde{K}(w,w).
\end{eqnarray*}
Since compact $K\subset D$ was arbitrary, we have
\[
\Vert \tilde{K}(\cdot,w)\Vert_{L^2_{\mu}(D)}^2=
\int_D \vert\tilde{K}(z,w)\vert^2\mu(z) \,dA(z)
\leq 
\tilde{K}_{D,\mu}(w,w)<\infty.
\]
Let $\gamma\subset\subset D$ be a closed curve. Then $\gamma\subset D_k$ for large enough $k$. Since the holomorphic functions $\tilde{K}_{D_k,\mu_k}(\cdot,w)$ have a primitive on $D_k$ and converge locally uniformly to $\tilde{K}(\cdot,w)$ on $D$,
\[
\int_{\gamma} \tilde{K}(z,w) dz =\int_{\gamma} \lim_{k\rightarrow\infty} \tilde{K}_{D_k,\mu_k}(z,w) dz =\lim_{k\rightarrow\infty} \int_{\gamma} \tilde{K}_{D_k,\mu_k}(z,w) dz =0.
\]
Thus, $\tilde{K}(\cdot,w)$ has a primitive on $D$. Therefore, $\tilde{K}(\cdot,w)\in\mathcal{D}^{\mu}(D)$. Thus, the function $\tilde{K}(\cdot,w)\in \tilde{S}_{D,\mu}^w$ and $\tilde{K}(w,w)= \tilde{K}_{D,\mu}(w,w)$. Hence, Lemma \ref{charac1} implies that $\tilde{K}(\cdot,w)\equiv \tilde{K}_{D,\mu}(\cdot,w)$. Since $w\in D$ was arbitrary, we conclude that $\tilde{K}\equiv \tilde{K}_{D,\mu}$.

\medskip

Part (B) follows from the determinant formula (\ref{determinant}) as in the proof of Theorem \ref{Ramadanov} $(1)$.
\end{proof}

\section{Boundary behaviour}

\begin{proof}[Proof of Theorem \ref{LocalizationHigherDerivatives}]
Since $\mu$ extends continuously to $p$ with $\mu(p)>0$, there exists a  neighborhood $U$ of $p$ in $\mathbb{C}$ such that $D\cap U$ is connected and $\mu\geq c$ a.e. on $D\cap U$ for some constant $c>0$. So, $(\mu\vert_{D\cap U})^{-1}\in L^1(D\cap U)$. Therefore, $\mu\vert_{D\cap U}$ is an admissible weight on $D\cap U$ and hence $\tilde{K}_{D\cap U,\mu,n}$ is well-defined.

\medskip 

As $D\cap U\subset D$, the monotonicity of the solution of the extremal problems in Remark \ref{extremal} implies that $\tilde{K}^{(n-1)}_{D\cap U,\mu,n}(\zeta,\zeta)\geq \tilde{K}^{(n-1)}_{D,\mu,n}(\zeta,\zeta)$ for all $\zeta\in D\cap U$. Therefore,
\[
1\leq\liminf_{\zeta\rightarrow p}\frac{\tilde{K}^{(n-1)}_{D\cap U,\mu,n}(\zeta,\zeta)}{\tilde{K}^{(n-1)}_{D,\mu,n}(\zeta,\zeta)}.
\]
It is now enough to show that
\begin{equation}\label{localization}
\limsup_{\zeta\rightarrow p}\frac{\tilde{K}^{(n-1)}_{D\cap U,\mu,n}(\zeta,\zeta)}{\tilde{K}^{(n-1)}_{D,\mu,n}(\zeta,\zeta)}\leq 1.
\end{equation}
Since $p$ is a holomorphic local peak point, choose $h$ and $W$ as in the Definition \ref{peak point}. For any neighborhood $\Hat{U}\subset U$ of $p$, monotonicity gives that
\[
\limsup_{\zeta\rightarrow p}\frac{\tilde{K}^{(n-1)}_{D\cap U,\mu,n}(\zeta,\zeta)}{\tilde{K}^{(n-1)}_{D,\mu,n}(\zeta,\zeta)}\leq \limsup_{\zeta\rightarrow p}\frac{\tilde{K}^{(n-1)}_{D\cap \Hat{U},\mu,n}(\zeta,\zeta)}{\tilde{K}^{(n-1)}_{D,\mu,n}(\zeta,\zeta)}.
\]
Therefore, we may assume that $U\subset (N_p\cap W)$ without loss of generality. Choose an open neighborhood $U_0\subset \subset U$ of $p$ such that $h\neq 0$ on $\bar{D}\cap U_0$. Then there exists a constant $a\in(0,1)$ such that $\vert h\vert\leq a$ on $\overline{(U\setminus U_0)\cap D}$. Choose a cut-off function $\chi:\mathbb{C}\longrightarrow[0,1]$ such that $\chi\equiv 1$ on some neighborhood $\Hat{U}_0$ of $\overline{U}_0$ and $supp\,\chi\subset U$. 
Fix $\zeta\in D\cap U_0$ and put
\[
f=\frac{M_{D\cap U,\mu,n}(\cdot,\zeta)}{\sqrt{(M_{D\cap U,\mu,n}(\cdot,\zeta))^{(n)}(\zeta)}}.
\]
Then, $f\in AD^{\mu}(D\cap U,\zeta^n)$ such that $\lVert f\rVert_{AD^{\mu}(D\cap U,\zeta^n)}=1$ and $\lvert f^{(n)}(\zeta)\rvert^2=\tilde{K}^{(n-1)}_{D\cap U,\mu,n}(\zeta,\zeta)$.

Without loss of generality, assume that $p$ lies on the outer boundary curve of $D$, i.e., the boundary of the unbounded component of $\mathbb{C}\setminus D$. Let $\Hat{D}$ be the domain obtained by filling all the bounded components of $\mathbb{C}\setminus D$. Then, $\Hat{D}$ is a simply connected domain.
For $k\geq 1$, define
\[
\alpha_k=
\begin{cases}
\bar{\partial}(\chi f' h^k)=f' h^k\frac{\partial\chi}{\partial\bar{z}}\,d\bar{z}&, \text{ on } D\cap U=\Hat{D}\cap U
\\
0 &, \text{ on } \Hat{D}\setminus U
\end{cases}.
\]
Then, $\alpha_k$ is a smooth $\bar{\partial}$-closed $(0,1)$-form on $\Hat{D}$. Put $\phi(z)=(2n+2)\log\lvert z-\zeta\rvert$ for $z\in \Hat{D}$. The function $\phi$ is a subharmonic function on $\Hat{D}$. Also,
\begin{eqnarray*}
\int_{\Hat{D}}\lvert\alpha_k\rvert^2(z)\,\exp(-\phi(z))\,dA(z)
&=&
\int_{D\cap U} \lvert f'(z)\rvert^2\lvert h(z)\rvert^{2k}\left\lvert\frac{\partial\chi}{\partial\bar{z}}(z)\right\rvert^2\lvert z-\zeta\rvert^{-(2n+2)}\,dA(z)
\\
&\leq&
C_1\int_{D\cap(U\setminus\Hat{U}_0)} \lvert f'(z)\rvert^2\lvert h(z)\rvert^{2k}\,dA(z)
\\
&\leq&
C_1\,a^{2k}\frac{1}{c}\int_{D\cap U} \lvert f'(z)\rvert^2\mu(z)\,dA(z) = C_2\, a^{2k}<\infty.
\end{eqnarray*}
where $C_1>0$ is chosen independent of $\zeta\in D\cap U_0$ and $C_2=C_1/c$. Therefore, $\alpha_k$ belongs to $L^2_{(0,1)}(\Hat{D},\exp(-\phi))$, the Hilbert space of $(0,1)$-forms that are square-integrable with respect to the weight function $\exp(-\phi)$. 
By Hormander's theory (see \cite{Hormander}), there exists a smooth function $g_k$ on $\Hat{D}$ such that $\bar{\partial}g_k=\alpha_k$ and
\[
\int_{\Hat{D}} \lvert g_k(z)\rvert^2 (1+\lvert z\rvert^2)^{-2}\exp(-\phi(z))\,dA(z)\leq C_2\,a^{2k}.
\]
Since $D$ is bounded, $\Hat{D}$ is also bounded. Therefore, we have
\[
\int_{\Hat{D}} \lvert g_k(z)\rvert^2 \lvert z-\zeta\rvert^{-(2n+2)}\,dA(z)\leq C_3\,a^{2k}<\infty,
\]
where $C_3=C_2\,\sup_{\Hat{D}}(1+\lvert z\rvert^2)^2>0$. Since the above integral is finite, we must have that $g_k(\zeta)=g_k'(\zeta)=\cdots=g_k^{(n-1)}(\zeta)=0$. As $\mu\in L^{\infty}(D)$, there exists a constant $M>0$ so that
\begin{eqnarray*}
\int_{\Hat{D}} \lvert g_k(z)\rvert^2 \mu(z)dA(z)
&\leq&
M\sup_{\Hat{D}}\lvert z-\zeta\rvert^{2n+2}\int_{\Hat{D}} \lvert g_k(z)\rvert^2 \lvert z-\zeta\rvert^{-(2n+2)}dA(z)\\
&\leq&
M(diam D)^{2n+2} \,C_3\,a^{2k}<\infty.
\end{eqnarray*}
Therefore, $g_k\in L^2_{\mu}(\Hat{D})$ and $\lVert g_k\rVert_{L^2_{\mu}(\Hat{D})}\leq C_0\,a^{k}$, where $C_0=\sqrt{M(diam D)^{2n+2}C_3}>0$. Put, $\Hat{F}_k=\chi f' h^k-g_k$. Then, $\Hat{F}_k\in\mathcal{O}(\Hat{D})$. We also have $\Hat{F}_k(\zeta)={\Hat{F}_k}'(\zeta)=(\Hat{F}_k)^{(n-2)}(\zeta)=0$ and  $(\Hat{F}_k)^{(n-1)}(\zeta)=f^{(n)}(\zeta)(h(\zeta))^k$. Since $\Hat{D}$ is simply connected, $\Hat{F}_k$ has a primitive on $\Hat{D}$. Choose $F_k\in\mathcal{O}(D)$ such that $F_k(\zeta)=0$ and $F_k'=\Hat{F}_k$. Since $F_k(\zeta)=F_k'(\zeta)=\cdots=F_k^{(n-1)}(\zeta)=0$ and 
\[
\lVert {F_k}'\rVert_{L^2_{\mu}(D)}
=\lVert \Hat{F}_k\rVert_{L^2_{\mu}(D)}
\leq
\lVert f'\rVert_{L^2_{\mu}(D\cap U)}+\lVert g_k\rVert_{L^2_{\mu}(D)}
\leq
1+C_0\,a^k,
\]
we have $F_k\in AD^{\mu}(D,\zeta^n)$ and $\Vert F_k\Vert_{AD^{\mu}(D,\zeta^n)}\leq 1+C_0\,a^k$.
Hence,
\begin{eqnarray*}
\lvert h(\zeta)\rvert^{2k}\,\tilde{K}^{(n-1)}_{D\cap U,\mu,n}(\zeta,\zeta)
&=&
\lvert h(\zeta)\rvert^{2k}\,\lvert f^{(n)}(\zeta)\rvert^2
=
\vert F_k^{(n)}(\zeta)\rvert^2
\\
&=&
\left\lvert\langle F_k,M_{D,\mu,n}(\cdot,\zeta)\rangle_{AD^{\mu}(D,\zeta^n)}\right\rvert^2
\\
&\leq&
\lVert F_k\rVert_{AD^{\mu}(D,\zeta^n)}^2 \,
\lVert M_{D,\mu,n}(\cdot,\zeta)\rVert_{AD^{\mu}(D,\zeta^n)}^2
\leq
(1+C_0\,a^k)^2 \,
\tilde{K}^{(n-1)}_{D,\mu,n}(\zeta,\zeta).
\end{eqnarray*}
Therefore,
\[
\frac{\tilde{K}^{(n-1)}_{D\cap U,\mu,n}(\zeta,\zeta)}{\tilde{K}^{(n-1)}_{D,\mu,n}(\zeta,\zeta)}
\leq\frac{(1+C_0\,a^k)^2}{\lvert h(\zeta)\rvert^{2k}}.
\]
Since $C_0$ is independent of $\zeta$, we have
\[
\limsup_{\zeta\rightarrow p}\frac{\tilde{K}^{(n-1)}_{D\cap U,\mu,n}(\zeta,\zeta)}{\tilde{K}^{(n-1)}_{D,\mu,n}(\zeta,\zeta)}
\leq(1+C_0\,a^k)^2.
\]
Finally, since $a\in(0,1)$ and $C_0$ is independent of $k$, taking the limit $k\rightarrow\infty$ gives (\ref{localization}).
\end{proof}

\begin{proof}[Proof of Theorem \ref{Trans}]
    Let $\{\varphi_n\}$ be a Cauchy sequence in $A^2_{\nu\circ f}(D_1)$. Then, applying the change of variables formula
\[
\int_{D_1}\vert\varphi_n-\varphi_m\vert^2(z) \,(\nu\circ f)(z)\,dA(z)
=
\int_{D_2} \vert (\varphi_n\circ f^{-1})-(\varphi_m\circ f^{-1})\vert^2(z)\,\nu(z)\,\vert (f^{-1})'(z)\vert^2\,dA(z)
\]
implies that $\{(\varphi_n\circ f^{-1})\,(f^{-1})'\}$ is a Cauchy sequence in $A^2_{\nu}(D_2)$. Therefore, there exists a function $\varphi\in A^2_{\nu}(D_2)$ such that $\Vert (\varphi_n\circ f^{-1})\,(f^{-1})'-\varphi\Vert_{L^2_{\nu}(D_2)}\rightarrow 0$ as $n\rightarrow\infty$. It can now be checked using change of variables formula, as above, that $(\varphi\circ f)\,f'\in A^2_{\nu\circ f}(D_1)$ and $\Vert \varphi_n-(\varphi\circ f)\,f'\Vert_{L^2_{\nu\circ f}(D_1)}\rightarrow 0$ and $n\rightarrow\infty$. Thus, $A^2_{\nu\circ f}(D_1)$ is a closed subspace of $L^2_{\nu\circ f}(D_1)$, and hence a Hilbert space.
Now for $z\in D_1$, the evaluation functional $A^2_{\nu}(D_2)\ni h\mapsto h(f(z))$ is continuous, i.e., there exists a constant $C_z>0$ such that 
\[
\vert h(f(z))\vert\leq C_z \,\Vert h\Vert_{L^2_{\nu}(D_2)}
\quad \text{for all }h\in A^2_{\nu}(D_2).
\]
For every $g\in A^2_{\nu\circ f}(D_1)$, the function $(g\circ f^{-1})\, (f^{-1})'\in A^2_{\nu}(D_2)$. Therefore,
\begin{eqnarray*}
\vert g(z)\vert 
&=&
\vert f'(z)\vert\,\vert g(z)\, (f^{-1})'(f(z))\vert 
\,\leq\,
\vert f'(z)\vert \, C_z \, \Vert (g\circ f^{-1})\,(f^{-1})'\Vert_{L^2_{\nu}(D_2)}\\
&=&
\vert f'(z)\vert \, C_z \, \Vert g \Vert_{L^2_{\nu\circ f}(D_1)}.
\end{eqnarray*}
Hence, the evaluation functional $A^2_{\nu\circ f}(D_1)\ni g\mapsto g(z)$ is continuous.
So, $\nu\circ f$ is an admissible weight on $D_1$ and therefore $M_{D_1,n,\nu\circ f}$ is well-defined. 

\medskip

Let $\zeta\in D_1$ be arbitrary. Let $\varphi\in AD^{\nu}(D_2,(f(\zeta))^n)$ be arbitrary. Note that $(\varphi\circ f)(\zeta)=(\varphi\circ f)'(\zeta)=\cdots=(\varphi\circ f)^{(n-1)}(\zeta)=0$. Also,
\begin{eqnarray*}
\int_{D_1} \vert (\varphi\circ f)'(z)\vert^2(\nu\circ f)(z)\, dA(z)
&=&
\int_{D_2} \vert\varphi'(w)\vert^2 \, \vert f'(f^{-1}(w))\vert^2\, \vert (f^{-1})'(w)\vert^2\nu(w)\, dA(w)\\
&=&
\int_{D_2}\vert \varphi'(w)\vert^2\nu(w)\,dA(w).
\end{eqnarray*}
Therefore, $\varphi\circ f\in AD^{\nu\circ f}(D_1,\zeta^n)$. So,
\begin{eqnarray*}
\varphi^{(n)}(f(\zeta))(f'(\zeta))^n
&=&(\varphi\circ f)^{(n)}(\zeta)
=
\int_{D_1} (\varphi\circ f)'(z)\,\overline{(M_{D_1,\nu\circ f,n}(\cdot,\zeta))'(z)}(\nu\circ f)(z)\,dA(z)
\\
&=&
\int_{D_2} \varphi'(w) \, f'(f^{-1}(w))\, \overline{(M_{D_1,\nu\circ f,n}(\cdot,\zeta))'(f^{-1}(w))}\,\nu(w)\,\vert (f^{-1})'(w)\vert^2\,dA(w)
\\
&=& 
\int_{D_2} \varphi'(w) \, \overline{(M_{D_1,\nu\circ f,n}(\cdot,\zeta))'(f^{-1}(w))\cdot(f^{-1})'(w)}\,\nu(w)\,dA(w)
\\
&=&\int_{D_2} \varphi'(w) \, \overline{(M_{D_1,\nu\circ f,n}(\cdot,\zeta)\circ f^{-1})'(w)}\,\nu(w)\,dA(w).
\end{eqnarray*}
Since $(M_{D_1,\nu\circ f,n}(\cdot,\zeta)\circ f^{-1})\in AD^{\nu}(D_2,(f(\zeta))^n)$, it follows by the uniqueness of the reproducing kernel of $AD^{\nu}(D_2,(f(\zeta))^n)$ that $M_{D_2,\nu,n}(w,f(\zeta))=(\overline{f'(\zeta)})^{-n}\,M_{D_1,\nu\circ f,n}(f^{-1}(w),\zeta)$ for all $w\in D_2$ and $\zeta\in D_1$. That is,
\[
(\overline{f'(\zeta)})^n \, M_{D_2,\nu,n}(f(z),f(\zeta))=M_{D_1,\nu\circ f,n}(z,\zeta)\quad\text{for all } z,\zeta\in D_1.
\]
This can be differentiated $n$ times to obtain the other transformation formula.
\end{proof}

\begin{proof}[Proof of Theorem \ref{Boundary behaviour}]
We will use the scaling principle in the case when $D$ is simply connected.
So let us quickly recall the scaling principle:

\medskip

Let $\psi$ be a $C^2$ defining function for $D$ near $p$ such that $\frac{\partial\psi}{\partial z}(p)=1$. Let $U$ be a neighborhood of $p$ in $\mathbb{C}$ where $\psi$ is defined. Choose a sequence $p_j$ in $D\cap U$ converging to the boundary point $p$. For $j\in\mathbb{Z}^+$, define the affine maps
\[
T_j(z)=\frac{z-p_j}{-\psi(p_j)},\quad\quad z\in\mathbb{C}.
\]
Let $D_j:=T_j(D)$. Note that $0\in D_j$ for every $j$ as $T_j(p_j)=0$.
Let $K\subset \mathbb{C}$ be compact set. Since $\psi(p_j)\rightarrow 0$ as $j\rightarrow\infty$, $\{T_j(U)\}$ is an eventually increasing family of open sets that exhaust $\mathbb{C}$ and hence $K\subset T_j(U)$ for all large $j$.  Taking the Taylor series expansion of $\psi$ near $z=p_j$, the functions
\begin{eqnarray*}
\psi\circ T_j^{-1}(z)&=&\psi(p_j+z(-\psi(p_j)))\\
&=&\psi(p_j)+2Re\left(\frac{\partial\psi}{\partial z}(p_j)z\right)(-\psi(p_j))+\psi(p_j)^2o(1)
\end{eqnarray*}
are therefore well-defined on $K$ for all large $j$. Let $\psi_j$ be the defining functions of $D_j$ defined near $T_j(p)\in\partial D_j$ on $T_j(U)$,  given by
\begin{eqnarray*}
\psi_j(z)&=&\frac{1}{(-\psi(p_j))}\psi\circ T_j^{-1}(z)\\
&=&-1+2Re\left(\frac{\partial\psi}{\partial z}(p_j)z\right)+(-\psi(p_j))o(1).
\end{eqnarray*}
It is easy to see that $\psi_j$ converges to
\[
\psi_{\infty}(z)=-1+2Re\left(\frac{\partial\psi}{\partial z}(p)z\right)=-1+2Re z.
\]
uniformly on all the compact subsets of $\mathbb{C}$.
Let $\mathcal{H}$ denote the half-space defined by $\psi_{\infty}$, i.e.
\[
\mathcal{H}=\left\lbrace z: -1+2Re z < 0\right\rbrace.
\]
Since $\psi_j$ converges to $\psi_{\infty}$ uniformly on compacts, it can be shown that the closure of the domains $T_j(D\cap U)$ (and therefore $T_j(D)=D_j$) converge to the closure of the half-space $\mathcal{H}$ in the Hausdorff sense. Therefore, every compact $K\subset\mathcal{H}$ is eventually contained in $D_j$, and every compact $K\subset\mathbb{C}\setminus\overline{\mathcal{H}}$ is eventually contained in $\mathbb{C}\setminus\overline{D_j}$.

\medskip

We want to study the behaviour of the sequence $\tilde{K}^{(n-1)}_{D,n}(p_j,p_j)$ as $j\rightarrow\infty$.
Applying Theorem (\ref{Trans}) on $T_j:D\longrightarrow D_j$ gives
\[
\tilde{K}_{D,n}^{(n-1)}(z,\zeta)
=
({T_j}'(z))^n\,\tilde{K}^{(n-1)}_{D_j,n}(T_j(z),T_j(\zeta))\,(\overline{{T_j}'(\zeta)})^n
\quad z,\zeta\in D.
\]
Since $T_j'\equiv -1/\psi(p_j)$  and $T_j(p_j)=0$, we obtain
\begin{equation}\label{boundary1}
\tilde{K}_{D,n}^{(n-1)}(p_j,p_j)=\frac{1}{(\psi(p_j))^{2n}}\,\tilde{K}_{D_j,n}^{(n-1)}(0,0),\quad j\in\mathbb{Z}^+.
\end{equation}
Therefore, it is enough to study the behaviour of $\tilde{K}_{D_j,n}^{(n-1)}(0,0)$ as $j\rightarrow\infty$. This is called the scaling principle where a boundary problem has been converted to an interior problem at the cost of varying the domains (for details, see \cite{scaling}).

\begin{proof}[Case 1.]
Let $D$ be simply connected. Since $T_j$'s are affine maps, the domains $D_j$'s are also simply connected. Let $F_j:D_j\longrightarrow\mathcal{H}$ be the Riemann maps such that $F_j(0)=0$ and $F_j'(0)>0$. Applying Theorem (\ref{Trans}) on the Riemann maps $F_j$ gives
\begin{equation}\label{trans_half space}
    \tilde{K}_{D_j,n}^{(n-1)}(z,w)=(F_j'(z))^n\,\tilde{K}_{\mathcal{H},n}^{(n-1)}(F_j(z),F_j(w))\,(\overline{F_j'(w)})^n,\quad\quad z,w\in D_j.
\end{equation}
The problem is therefore reduced to studying the Riemann maps $F_j$.

\begin{lem}\label{Riemann maps}
The Riemann maps $F_j$ converge to the identity map $i_{\mathcal{H}}$ locally uniformly on $\mathcal{H}$. 
\end{lem}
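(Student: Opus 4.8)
The plan is to run a Carath\'eodory kernel--convergence type argument, using the two one-sided Hausdorff statements recorded above: every compact $K\subset\mathcal{H}$ is eventually contained in $D_j$, and every compact $K\subset\mathbb{C}\setminus\overline{\mathcal{H}}$ is eventually contained in $\mathbb{C}\setminus\overline{D_j}$. Throughout I would work with the inverses $G_j:=F_j^{-1}:\mathcal{H}\to D_j$, which satisfy $G_j(0)=0$ and $G_j'(0)=1/F_j'(0)>0$.

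First I would establish the relevant normality. Fixing two points $a,b\in\mathbb{C}\setminus\overline{\mathcal{H}}$, one has $a,b\notin\overline{D_j}$ for all large $j$, so each such $G_j$ omits the two fixed values $a,b$ on $\mathcal{H}$; by Montel's theorem $\{G_j\}$ is a normal family on $\mathcal{H}$, and since $G_j(0)=0$ no subsequence can converge to $\infty$. Similarly, choosing two points of $\mathbb{C}\setminus\mathcal{H}$ and exhausting $\mathcal{H}$ by compact sets (each eventually inside $D_j$), the restrictions of the $F_j$ to such sets form a normal family with $F_j(0)=0$. Hence from any subsequence of indices one may extract a further subsequence along which $G_j\to G$ and $F_j\to F$ locally uniformly on $\mathcal{H}$, where $G,F$ are holomorphic, $G(0)=F(0)=0$, and $G(\mathcal{H}),F(\mathcal{H})\subseteq\overline{\mathcal{H}}$; the last inclusions hold because a value of $G$ or $F$ lying in $\mathbb{C}\setminus\overline{\mathcal{H}}$ would force $G_j$ or $F_j$ to take a value outside $D_j$ for large $j$. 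Applying Koebe's one-quarter covering theorem to $F_j$ on a disc $D(0,\rho)\subset\mathcal{H}$ (which lies in $D_j$ for large $j$) shows $|F_j'(0)|\le 2/\rho$, so $G_j'(0)=1/F_j'(0)\ge\rho/2$; thus $G'(0)>0$, so $G$ is nonconstant and therefore, by Hurwitz, univalent and open, whence $G(\mathcal{H})\subseteq\mathcal{H}$.

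Next I would identify the limit. For $w\in\mathcal{H}$ we have $F_j(G_j(w))=w$; since $G_j(w)\to G(w)\in\mathcal{H}$ and $F_j\to F$ uniformly on a small closed disc about $G(w)$ contained in $\mathcal{H}$, passing to the limit gives $F(G(w))=w$, that is, $F\circ G=i_{\mathcal{H}}$. In particular $F$ is nonconstant, hence univalent and open, and surjective onto $\mathcal{H}$ (each $u\in\mathcal{H}$ equals $F(G(u))$), so together with $F(\mathcal{H})\subseteq\overline{\mathcal{H}}$ we get $F(\mathcal{H})=\mathcal{H}$. Thus $G$ and $F$ are holomorphic self-maps of $\mathcal{H}$ fixing $0$. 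Transporting the Schwarz lemma to $\mathcal{H}$ via a conformal map onto $\mathbb{D}$ fixing $0$ yields $|G'(0)|\le 1$ and $|F'(0)|\le 1$; but $F'(0)\,G'(0)=(F\circ G)'(0)=1$, forcing $|G'(0)|=|F'(0)|=1$, the equality case of the Schwarz lemma, so $G$ is a conformal automorphism of $\mathcal{H}$ fixing $0$. Since each $G_j'(0)$ is positive, $G'(0)$ is a nonnegative real of modulus $1$, hence $G'(0)=1$; and the only automorphism of $\mathcal{H}$ fixing $0$ with derivative $1$ there is the identity, so $G=i_{\mathcal{H}}$. As every subsequence of indices admits a further subsequence along which $G_j\to i_{\mathcal{H}}$, the full sequence satisfies $G_j\to i_{\mathcal{H}}$ locally uniformly on $\mathcal{H}$; the same reasoning (extracting further so that $G_j$ converges as well, and using $F\circ G=i_{\mathcal{H}}$ with $G=i_{\mathcal{H}}$) shows every subsequential limit of $\{F_j\}$ equals $i_{\mathcal{H}}$, whence $F_j\to i_{\mathcal{H}}$ locally uniformly on $\mathcal{H}$.

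The step I expect to be the main obstacle is ruling out ``loss of domain'': a priori a subsequential limit $G$ could map $\mathcal{H}$ conformally onto a proper subdomain of $\mathcal{H}$, and it is precisely the relation $F\circ G=i_{\mathcal{H}}$ --- obtained from $F_j\circ G_j=i_{\mathcal{H}}$ by a passage to the limit that must be carried out carefully so that the arguments stay inside $\mathcal{H}$ --- together with the Schwarz-lemma derivative identity that forces surjectivity, and hence $G=i_{\mathcal{H}}$. A secondary, essentially bookkeeping, difficulty is that the maps $F_j$, $G_j$ live on domains varying with $j$; the two one-sided Hausdorff convergence statements are exactly what make the normal-family extractions and the limit identifications legitimate.
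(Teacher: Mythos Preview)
Your proof is correct and follows essentially the same strategy as the paper: extract subsequential limits of $\{F_j\}$ and $\{G_j=F_j^{-1}\}$ via Montel's theorem (using that the maps omit two fixed values outside $\overline{\mathcal{H}}$), pass to the limit in the composition identity, and then pin down the limit as $i_{\mathcal{H}}$ by the uniqueness of the automorphism of $\mathcal{H}$ fixing $0$ with positive derivative. The minor differences are purely in the bookkeeping of the identification step: the paper passes to the limit in \emph{both} $F_{j_k}\circ G_{j_k}=i_{\mathcal{H}}$ and $G_{j_k}\circ F_{j_k}=i_{D_{j_k}}$ to obtain $F\circ G=G\circ F=i_{\mathcal{H}}$ directly, whereas you derive only $F\circ G=i_{\mathcal{H}}$ and then close the argument with the Schwarz--Pick derivative inequality $|F'(0)|,|G'(0)|\le 1$ together with $F'(0)G'(0)=1$. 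Your Koebe step to guarantee $G'(0)>0$ (hence $G$ nonconstant) is a nice explicit touch that the paper handles more implicitly via the open mapping theorem; either route is fine.
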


\begin{proof}
Since every compact $K\subset\mathbb{C}\setminus\overline{\mathcal{H}}$ is eventually contained in $\mathbb{C}\setminus\overline{D_j}$, we observe that both $\{F_j\}$ and $\{F_j^{-1}\}$ omit atleast two values in $\mathbb{C}$ and fixes $0$. Therefore, $\{F_j\}$ and $\{F_j^{-1}\}$ are normal families of holomorphic functions, i.e. there exists a subsequence $\{j_k\}$ of the sequence of natural numbers such that both $\{F_{j_k}\}$ and $\{F_{j_k}^{-1}\}$ converge locally uniformly on $\mathcal{H}$ to some holomorphic functions $F,\,G:\mathcal{H}\rightarrow\overline{\mathcal{H}}$ respectively. Since $F(0)=0=G(0)$, the Open mapping theorem implies that $F,\,G:\mathcal{H}\longrightarrow\mathcal{H}$.

\medskip

Now, $F_{j_k}\circ F_{j_k}^{-1}\equiv id_{\mathcal{H}}$ and $F_{j_k}^{-1}\circ F_{j_k}\equiv id_{D_j}$. Since $F_{j_k}\circ F_{j_k}^{-1}$ converges to $F\circ G$ locally uniformly on $\mathcal{H}$, we have $F\circ G\equiv id_{\mathcal{H}}$. Since any compact $K\subset\mathcal{H}$ is eventually contained in $D_j$, the sequence $F_{j_k}^{-1}\circ F_{j_k}$ converges to $G\circ F$ uniformly on $K$. So, $(G\circ F)\vert_K\equiv id_K$ and therefore $G\circ F\equiv id_{\mathcal{H}}$.
Thus, $F$ is an automorphism of $\mathcal{H}$ such that $F(0)=0$ and $F'(0)>0$. 

\medskip

Let $\varphi:\mathcal{H}\rightarrow\mathbb{D}$ be the Riemann map such that $\varphi(0)=0$ and $\varphi'(0)>0$. Then, $\phi=\varphi\circ F\circ \varphi^{-1}$ is an automorphism of $\mathbb{D}$ such that $\phi(0)=0$ and $\phi'(0)>0$. So, $\phi\equiv id_{\mathbb{D}}$ and therefore $F\equiv id_{\mathcal{H}}$.
Hence, the Riemann maps $F_j$ converge to the identity map $id_{\mathcal{H}}$ locally uniformly on $\mathcal{H}$.
\end{proof}

Applying Theorem \ref{Trans} on a Riemann map $\varphi:\mathcal{H}\rightarrow \mbb{D}$ gives that for $z,w\in\mathcal{H}$,
\[
\tilde{K}^{(n-1)}_{\mathcal{H},n}(z,w)=(\varphi'(z))^n \, \tilde{K}^{(n-1)}_{\mbb D,n}(\varphi(z),\varphi(w)) \, \overline{(\varphi'(w))^n}.
\]
Since $\tilde{K}^{(n-1)}_{\mathbb{D},n}$ is continuous on $\mathbb{D}\times\mathbb{D}$ (see Remark \ref{extremal}), the kernel function $\tilde{K}_{\mathcal{H},n}^{(n-1)}$ is continuous on $\mathcal{H}\times\mathcal{H}$. Therefore, we conclude from equation (\ref{trans_half space}) and Lemma \ref{Riemann maps} that
\[
\lim_{j\rightarrow\infty} \tilde{K}_{D_j,n}^{(n-1)}=\tilde{K}_{\mathcal{H},n}^{(n-1)}
\]
locally uniformly on $\mathcal{H}\times\mathcal{H}$. Thus, it follows from equation ($\ref{boundary1}$) that
\[
{\psi(p_j)}^{2n} \,\tilde{K}_{D,n}^{(n-1)}(p_j,p_j)\rightarrow \tilde{K}_{\mathcal{H},n}^{(n-1)}(0,0)\quad \text{as }j\rightarrow\infty.
\]
By a previous calculation, for $z,\zeta\in\mathbb{D}$
\[
\tilde{K}_{\mathbb{D},n}(z, \zeta) = \frac{n!}{\pi} \frac{(z - \zeta)^{n-1}}{(1 - z\overline{\zeta})^{n+1} (1 - \vert \zeta\vert^2)^{n-1}}.
\]
Upon differentiating $n-1$ times with respect to $z$, we obtain
\begin{eqnarray*}
\tilde{K}_{\mathbb{D},n}^{(n-1)}(z,\zeta)
&=&
\frac{n!}{\pi}\frac{1}{(1-\vert \zeta\vert^2)^{n-1}}
\sum_{k=0}^{n-1} \binom{n-1}{k} \frac{(n-1)!}{k!} (z-\zeta)^{k} \frac{(n+k)!}{n!} \frac{\overline{\zeta}^k}{(1-z\overline{\zeta})^{n+k+1}} \\
&=&
\frac{1}{\pi} \frac{1}{(1-\vert \zeta\vert^2)^{n-1}}
\sum_{k=0}^{n-1}
\binom{n-1}{k}^2 (n+k)! (n-k-1)! \,\frac{\overline{\zeta}^k(z-\zeta)^k}{(1-z\overline{\zeta})^{n+k+1}}.
\end{eqnarray*}
Therefore,
\begin{eqnarray*}
\tilde{K}_{\mathbb{D},n}^{(n-1)}(z,z)
&=&
\frac{1}{\pi} \frac{1}{(1-\vert z\vert^2)^{n-1}}
\,n! (n-1)! \,\frac{1}{(1-\vert z\vert^2)^{n+1}}\\
&=&
\frac{n!(n-1)!}{\pi}\frac{1}{(1-\vert z\vert^2)^{2n}}.
\end{eqnarray*}
Applying Theorem \ref{Trans} on the biholomorphism $f:\mathcal{H}\longrightarrow\mathbb{D}$ defined by
\[
f(z)=\frac{2z+1}{-2z+3}
\]
gives that 
\[
\tilde{K}^{(n-1)}_{\mathcal{H},n}(0,0)
=
\vert f'(0)\vert^{2n}\,\tilde{K}^{(n-1)}_{\mathbb{D},n}(f(0),f(0))
=
\left(\frac{8}{9}\right)^{2n}\,
\frac{n!(n-1)!}{\pi}\left(\frac{9}{8}\right)^{2n}
=
\frac{n!(n-1)!}{\pi}.
\]
Hence, we have proved that
\begin{equation}
    \tilde{K}_{D,n}^{(n-1)}(z,z)
    \sim
    \frac{n!(n-1)!}{\pi} \frac{1}{(\psi(z))^{2n}} 
    \quad \text{ as }z\rightarrow p.
\end{equation}
Let $\delta(z)=dist(z,\partial D)$. Since $\frac{\partial\psi}{\partial z}(p)=1$, $(-\psi(z))\sim 2\,\delta(z)$ as $z\rightarrow p$. Therefore, 
\begin{equation}
    \tilde{K}_{D,n}^{(n-1)}(z,z)
    \sim
    \frac{n!(n-1)!}{\pi \, 2^{2n}} \frac{1}{(\delta(z))^{2n}} 
    \quad \text{ as }z\rightarrow p.
\end{equation}
In particular, we have proved that
\[
\tilde{K}_{D,n}^{(n-1)}(z,z)
\approx
\frac{1}{(\delta(z))^{2n}} 
\quad \text{ as }z\rightarrow p.
\]
This proves the theorem when $D$ is simply connected.
\end{proof}
\begin{proof}[Case 2.]
Let $D$ be a bounded domain in $\mbb C$ and $p\in\partial\mbb D$. Assume that $\partial D$ is $C^2$-smooth near $p$. Choose a neighborhood $U$ of $p$ such that $D\cap U$ is simply connected.

\medskip

Since $D\cap U$ is simply connected and $\partial (D\cap U)$ is $C^2$-smooth near $p$, we have by Case (1) that
\[
\tilde{K}_{D\cap U,n}^{(n-1)}(z,z)
\sim
\frac{n!(n-1)!}{\pi} \frac{1}{(\psi(z))^{2n}} 
\approx
\frac{1}{(\delta(z))^{2n}} 
\quad \text{ as }z\rightarrow p.
\]
By Theorem \ref{LocalizationHigherDerivatives}, we have
\[
\lim_{z\rightarrow p}\frac{\tilde{K}_{D\cap U,n}^{(n-1)}(z,z)}{\tilde{K}_{D,n}^{(n-1)}(z,z)}=1.
\]
Therefore,
\[
\tilde{K}_{D,n}^{(n-1)}(z,z)
\sim
\frac{n!(n-1)!}{\pi} \frac{1}{(\psi(z))^{2n}} 
\approx
\frac{1}{(\delta(z))^{2n}} 
\]
as $z\rightarrow p$.
\end{proof}
\noindent This completes the proof of the theorem.
\end{proof}

\begin{proof}[Proof of Theorem \ref{Boundary behaviour (w)}]
Since $\nu$ is continuous at $p$, for $0<\epsilon < \nu(p)$, there exists $\delta > 0$ such that whenever $| z - p | < \delta$, we have
\[
\nu(p) - \epsilon < \nu(z) < \nu(p) + \epsilon.
\]
Set $N_p := B(p, \delta) \cap D$.
Now, for all $z \in D$, define $\nu_+ (z) = \nu(p) - \epsilon$ and $\nu_- (z) = \nu(p) + \epsilon$. So, we get for all $z \in N_p$,
\[
\frac{\tilde{K}_{N_p,\nu_{-},n}^{(n - 1)}(z,z)}{\tilde{K}_{D,\nu,n}^{(n - 1)}(z,z)} \leq \frac{\tilde{K}_{N_p,\nu,n}^{(n - 1)}(z,z)}{\tilde{K}_{D,\nu,n}^{(n - 1)}(z,z)} \leq \frac{\tilde{K}_{N_p,\nu_+,n}^{(n - 1)}(z,z)}{\tilde{K}_{D,\nu,n}^{(n - 1)}(z,z)}.
\]
Note that by definition, it can seen that $\tilde{K}_{N_p,\nu_-,n}^{(n - 1)}(z,z) = \frac{\tilde{K}_{N_p,n}^{(n - 1)}(z,z)}{\nu(p) + \epsilon}$ and
$\tilde{K}_{N_p,\nu_+,n}^{(n - 1)}(z,z) = \frac{\tilde{K}_{N_p,n}^{(n - 1)}(z,z)}{\nu(p) - \epsilon}$. Applying the localization for the weighted $n$-th order reduced Bergman kernel, we get
\[
\lim_{z \to p} \frac{\tilde{K}_{N_p,\nu,n}^{(n - 1)}(z,z)}{\tilde{K}_{D,\nu,n}^{(n - 1)}(z,z)} = 1.
\]
Therefore, we obtain from the inequality above,
\[
\limsup_{z \to p}\frac{\tilde{K}_{N_p,n}^{(n - 1)}(z,z)/(\nu(p) + \epsilon)}{\tilde{K}_{D,\nu,n}^{(n - 1)}(z,z)} \leq 1 \,\,\text{and}\,\, \liminf_{z \to p}\frac{\tilde{K}_{N_p,n}^{(n - 1)}(z,z)/(\nu(p) - \epsilon)}{\tilde{K}_{D,\nu,n}^{(n - 1)}(z,z)} \geq 1.
\]
So
\[
\limsup_{z \to p}\frac{\psi(z)^{2n}\tilde{K}_{N_p,n}^{(n - 1)}(z,z)/(\nu(p) + \epsilon)}{\psi(z)^{2n}\tilde{K}_{D,\nu,n}^{(n - 1)}(z,z)} \leq 1 \,\,\text{and}\,\, \liminf_{z \to p}\frac{\psi(z)^{2n}\tilde{K}_{N_p,n}^{(n - 1)}(z,z)/(\nu(p) - \epsilon)}{\psi(z)^{2n}\tilde{K}_{D,\nu,n}^{(n - 1)}(z,z)} \geq 1.
\]
Now, using the boundary behaviour of the non-weighted $n$-th order reduced Bergman kernel
\[
\tilde{K}_{N_p,n}^{(n - 1)}(z,z)
\sim
\frac{n!(n-1)!}{\pi} \frac{1}{(\psi(z))^{2n}} \quad \text{ as }z\rightarrow p,
\]
we get
\[
\liminf_{z \to p} \tilde{K}_{D,\nu,n}^{(n - 1)}(z,z)  \geq \frac{1}{\nu(p) + \epsilon} \frac{n!(n-1)!}{\pi} \frac{1}{(\psi(z))^{2n}},
\]
and
\[
\limsup_{z \to p} \tilde{K}_{D,\nu,n}^{(n - 1)}(z,z)  \leq \frac{1}{\nu(p) - \epsilon} \frac{n!(n-1)!}{\pi} \frac{1}{(\psi(z))^{2n}}.
\]
Since $\epsilon$ is arbitrarily small, we have

\[
\tilde{K}_{D,\nu,n}^{(n - 1)}(z,z)  \sim \frac{1}{\nu(p)} \frac{n!(n-1)!}{\pi} \frac{1}{(\psi(z))^{2n}}\quad \text{ as }z\rightarrow p.
\]
This proves the required result.
\end{proof}

\end{document}